\documentclass[A4paper,11pt]{article} % Sets paper size
\usepackage[margin=1in]{geometry}    % Sets 1-inch margins
\usepackage{setspace}
\usepackage[utf8]{inputenc}
\usepackage[english]{babel}
\usepackage{amsmath,amssymb,amsthm}
\usepackage{mathtools}
\usepackage{xspace}
\usepackage{array}
\usepackage{authblk} % author adresses
\usepackage{doi}
\usepackage{orcidlink}
\usepackage{tabularx}
\usepackage{hyperref}
\hypersetup{
	pdftitle={Complexity of partitioned-items response problems:\\ matchings and perfect matchings},
	pdfauthor={Christoph Buchheim, Lowig Duer, Eva Ley, Maximilian Merkert, Komal Muluk},
	colorlinks,
	hidelinks,
	linkcolor=red,
	citecolor=green
}
\usepackage[capitalise]{cleveref} 
\usepackage{tcolorbox} % for highlited conventions

\usepackage{threeparttable}
\usepackage{makecell}
\usepackage{multirow}
\usepackage{booktabs}

\usepackage{tikz}
\usetikzlibrary{backgrounds,calc,decorations.pathreplacing, hobby, arrows, arrows.meta, decorations.pathreplacing, decorations.pathmorphing, shapes, shapes.symbols}
\tikzset{
	tiny vertex/.style={draw,black,circle,inner sep=0pt,minimum size=3pt,fill},
	box/.style={draw,gray,fill=none,rounded corners},
	curly/.style={-,decorate,decoration={snake,amplitude=.6mm,segment length=2mm,post length=0mm}}	
}
\colorlet{leadercolor}{blue}
\colorlet{followercolor}{red}

\usepackage[disable]{todonotes}
\theoremstyle{plain}
\newtheorem{theorem}{Theorem}[section]

\newtheorem{corollary}[theorem]{Corollary}
\newtheorem{proposition}[theorem]{Proposition}

\theoremstyle{remark}

\newtheorem{example}[theorem]{Example}
\theoremstyle{definition}

\makeatletter
\newcommand{\problemtitle}[1]{\def\PROBLEMTITLE{#1}}% 
\newcommand{\probleminput}[1]{\def\PROBLEMINPUT{#1}}% 
\newcommand{\problemquestion}[1]{\def\PROBLEMQUESTION{#1}}% 
\newcommand{\problemgoal}[1]{\def\PROBLEMGOAL{#1}}%
\newenvironment{problemenv}{
	\problemtitle{}\probleminput{}\problemquestion{}\problemgoal{}
}{
	\par\addvspace{0.5\baselineskip}
	\noindent\begin{tabularx}{\linewidth}{|@{\hspace{5pt}} l@{\hspace{5pt}} X l}
		\multicolumn{2}{@{}l}{\sc\PROBLEMTITLE} \\[1ex]
		\textbf{Given:} & \PROBLEMINPUT \\[1ex]
		\if\relax\detokenize\expandafter{\PROBLEMQUESTION}\relax
		\else
		\textbf{Question:} & \PROBLEMQUESTION \\
		\fi
		\if\relax\detokenize\expandafter{\PROBLEMGOAL}\relax
		\else
		\textbf{Goal:} & \PROBLEMGOAL \\
		\fi
	\end{tabularx}
	\par\addvspace{1.0\baselineskip}
}
\makeatother

\def\Z{\hbox{$\mathbb Z$}}
\def\Q{\hbox{$\mathbb Q$}}
\def\N{\hbox{$\mathbb N$}}
\newcommand{\abs}[1]{\lvert #1 \rvert}
\newcommand{\set}[1]{\left\{#1\right\}}
\DeclareMathOperator*{\argmax}{arg\,max}

\newcommand{\st}{\mathrm{\mathsf{s.t.}}}
\newcommand{\ie}{i.e.,\ }

\newcommand{\eg}{e.g.,\ }

\newcommand{\wrt}{with respect to\xspace}

\renewcommand{\epsilon}{\varepsilon}
\newcommand{\np}{\ensuremath{\mathsf{NP}}\xspace}
\newcommand{\p}{\ensuremath{\mathsf{P}}\xspace}

\newcommand{\fpt}{\ensuremath{\mathsf{FPT}}\xspace}

\newcommand{\problem}{\ensuremath{\mathcal{P}}\xspace}
\newcommand{\instance}{\ensuremath{\mathcal{I}}\xspace}

\newcommand{\rp}[1]{\textsc{RP(#1)}\xspace} 
\newcommand{\bil}[1]{\textsc{B(#1)}\xspace} 
\newcommand{\maxM}{\textsc{Max-M}\xspace}
\newcommand{\perfectM}{\textsc{Perfect-M}\xspace} 
\newcommand{\perfectBM}{\textsc{Perfect-BM}\xspace}
\newcommand{\maxBM}{\textsc{Max-BM}\xspace}

\newcommand{\BCBS}{\textsc{Balanced Complete Bipartite Subgraph}\xspace}
\newcommand{\bcbs}{{BCBS}\xspace}

\newcommand{\LL}{X}
\newcommand{\FF}{Y}
\newcommand{\feas}{{\cal F}}
\newcommand{\target}{{\cal Y}}
\newcommand{\bigM}{K}
\newcommand{\numset}[1]{[#1]}

\title{Complexity of partitioned-items response problems:\\ matchings and perfect matchings}
\author[1]{Christoph~Buchheim\orcidlink{0000-0001-9974-404X}}
\author[1]{Lowig~Duer\orcidlink{0009-0002-1990-4029}}
\author[2]{Eva~Ley\orcidlink{0009-0000-7927-4757}}
\author[2]{Maximilian~Merkert\orcidlink{0000-0002-7838-445X}}
\author[1]{Komal~Muluk\orcidlink{0009-0002-3915-7198}}

\affil[1]{
	Department of Mathematics, TU Dortmund University, Germany
	}
\affil[2]{
	Institute for Mathematical Optimization, TU Braunschweig, Germany
	}

\date{}

\begin{document}
\singlespacing
\maketitle

\begin{abstract}	
We consider bilevel optimization problems in which leader and follower jointly construct a feasible solution for an underlying combinatorial optimization problem. 
Response problems ask whether the leader can encourage---or, in the pessimistic setting, enforce---a reaction of the follower that includes a set of mandatory items while excluding 
a set of forbidden items.
Our investigation focuses on 
tractability results for various cases which emerge from different combinations of the total number of mandatory, forbidden, and neutral items.
After providing some results for response problems that hold for any underlying 
combinatorial optimization problem, we examine response problems over the maximum-weight matching problem and the minimum-weight perfect matching problem as illustrative and surprisingly varied examples.
Among other results, we show that the response problem is hard for even a single given mandatory or forbidden edge.
On the other hand, it is fixed-parameter tractable \wrt the total number of non-mandatory edges.
If, however, each follower's edge is either mandatory or forbidden,
the response problem for the perfect matching problem is solvable in polynomial time while it remains \np-hard for the maximum-weight matching problem.

\end{abstract}

%%%%%%%%%%%%%%%%%%%%%%%%%%%%%%%%%%%%%%%%%%%%%%%%%%%%%%%%

\thispagestyle{empty}
\clearpage
\setcounter{page}{1}

\section{Introduction}
\label{Sec:Intro}

In bilevel optimization, two players solve a nested optimization problem in a hierarchical way: The leader takes a decision first, which influences the parameters of the follower's optimization problem. The optimal solution of the latter in turn has an influence on the leader's objective value and potentially also the feasibility of the leader's decision. The leader thus has to anticipate the follower's reaction when taking her decision, which often renders bilevel optimization problems significantly harder than their single-level counterparts. For instance, bilevel linear programs are known to be \np-complete~\cite{Jeroslow1985,Buchheim2023BilevelInNP}. For a general discussion of bilevel linear or mixed-integer optimization, including solution approaches, we refer to~\cite{Dempe2020} or the very recent textbook~\cite{Beck26}.

In the context of combinatorial optimization, partitioned-items problems are a well-studied class of bilevel problems. 
Here, the ground set of the underlying problem is partitioned into items controlled by the leader and items controlled by the follower.
First, the leader chooses a subset of her items.
Then, the follower's task is to extend it to a feasible solution of the underlying problem by means of appending a subset of his items. 
The leader aims to optimize her objective function while the follower aims to optimize his own objective function, which is in general different from that of the leader.
For many underlying structures, this bilevel variant turns out to be one level harder in the polynomial hierarchy than their corresponding single-level counterparts. 
Among others, this has been shown for the assignment problem~\cite{Gassner2009,Fischer2021}, the spanning tree problem~\cite{Shi2019,BHH22_ComplexityBilevelSpanningTree}, the knapsack problem~\cite{CCLW14_ComplexityBilevelKnapsack}, and the independent set problem~\cite{Muluk2026}.
For the partitioned-items shortest path problem, the bilevel variant is~$\Sigma_2^\text{P}$-hard~\cite{HW25_ComplexityBilevelShortestPath}. However, this is caused by the implicit fixing of edges in the shortest path problem to be solved by the follower or, equivalently, by the appearance of negative weights, in which case the shortest simple path problem itself is \np-hard~\cite{GarJoh1979}.

Another branch of bilevel optimization problems asks whether the leader can incentivize a follower's reaction that belongs to a given set of desired responses by modifying parameter values, \eg objective coefficients.
For linear programs, this decision problem has been investigated in different cases~\cite{Buchheim2025}. 
In 
inverse optimization, the leader searches for a minimal perturbation of a given follower's objective. 
If there is a single desired follower's response, \ie if the response is completely determined, the inverse problem for polynomial-time solvable follower's problems remains solvable in polynomial time~\cite{AO01_InverseOptimization}.
Efficient solution methods have been proposed for several of these problems; see~\cite{Heuberger04_InverseCombinatorialOptimization} and references therein.
In contrast, if the leader is only interested in certain aspects of the follower's reaction but indifferent about the rest, the partial 
inverse problems can be \np-hard even if the follower's problem is solvable in polynomial time; see~\cite{LM25_PICPsWplus} and references therein.

\emph{Response problems} are a combination of the two previously mentioned classes of bilevel problems. 
The ground set of a combinatorial optimization problem is first partitioned into items controlled by either the leader or the follower, then the follower's items are further partitioned into items that the leader wants to enforce (\emph{mandatory items}), wants to prevent (\emph{forbidden items}), or is indifferent about (\emph{neutral items}).
If there are no neutral items, there is exactly one desired follower's response;
we call this setting a \emph{complete} response problem in contrast to a \emph{partial} response problem where several of the follower's responses suit the leader.
For the spanning tree problem, the complete response problem is \np-hard even if there is only a single mandatory edge~\cite{BHH22_ComplexityBilevelSpanningTree}.

In this paper, we study tractability of some response problems depending on the total number of mandatory, forbidden, and neutral items given in the instance, establishing fixed-parameter tractability in some \np-hard cases and providing an overview of the arising complexity cases.
We first formally define the response problem, show general results which hold for arbitrary underlying problems, and then focus on the matching response problem. 
The latter turns out to produce a surprisingly rich complexity landscape depending on the restrictions we make. 
The response problem for maximum-weight matching is \np-hard in general, even with only one mandatory edge and no forbidden edges or vice versa. 
\np-hardness also persists when all follower's edges are forbidden. 
On the other hand, the problem turns tractable if all follower's edges are mandatory. 
More generally, we show fixed-parameter tractability in the total number of non-mandatory follower's edges. 
When considering perfect matchings, the results are mostly similar, with an important exception: If all follower's edges are forbidden, the response problem is \np-hard for maximum matchings, but tractable for perfect matchings.
All our results, in particular the negative ones, are also valid for bipartite graphs. 

The remainder of this paper is organized as follows. We give a formal definition of the response problem in Section~\ref{Sec:GeneralSetting}, together with results that are independent of the underlying combinatorial structure. In Section~\ref{Sec:Matching}, we investigate the complexity of the response problem for perfect and maximum-weight matchings, including fixed-parameter tractability results. Section~\ref{Sec:Conclusions} concludes.

\section{Response Problems}
\label{Sec:GeneralSetting}

In this study on response problems, we investigate the degree to which the leader can influence the follower's reaction in a partitioned-items bilevel problem.
As the basis of a  partitioned-items problem, we first define a general combinatorial optimization problem:
\begin{problemenv}
    \problemtitle{Combinatorial Optimization Problem: \problem}
    \probleminput{A finite ground set~$I$, a set of feasible solutions~$\feas\subseteq 2^I$, and a function~$c\colon I\to\Q$.}
    \problemgoal{Maximize~$c(X)\coloneqq\sum_{i\in X}c(i)$ over all~$X\in\feas$.}
\end{problemenv}
\noindent Note that the set of feasible solutions~$\feas$ does not need to be given as a list of elements but can be encoded implicitly depending on the specific problem structure.

\paragraph{Partitioned-items problems}
In partitioned-items bilevel problems, the ground set~$I$ of a combinatorial optimization problem instance $(I,\feas,c)$ of~\problem\ is partitioned into a set~$I_\ell\subseteq I$ of items controlled by the leader and the remaining items~$I_f\coloneqq I\setminus I_\ell$ controlled by the follower.
First, the leader chooses a subset of items from~$I_\ell$,
to this, the follower reacts by choosing a subset from~$I_f$; both subsets together must form a feasible solution of~\problem, \ie an element of~$\feas$.
Both decision makers optimize their own objective functions, denoted by $c\colon I \to \Q$ for the leader and $d\colon I_f \to \Q$ for the follower.
Note that the leader's objective value also depends on the follower's choice.

If there are several possibilities for the follower that are optimal \wrt his objective, these can result in different values for the leader's objective.
In the optimistic setting, the leader selects among these according to her objective.
Thus, the optimistic problem can formally be stated as follows:
\begin{problemenv}
    \problemtitle{Partitioned-items Problem: \bil{\problem}}
    \probleminput{An instance~$(I,\feas,c)$ of \problem, a partition~$I=I_\ell\ \dot\cup\ I_f$, and a function $d\colon I_f\to \Q$.}
    \problemgoal{Compute an optimal solution to
        \begin{equation*}
        \begin{aligned}
            \max_{\LL, \FF} \quad & c(\LL\cup\FF)\\ 
            \st \quad & \LL\subseteq I_\ell\\
            & \FF\in\argmax_{\FF'}~\{d(\FF')\mid \FF'\subseteq I_f,~\LL\cup \FF'\in\feas\}\;.\\[-2.5ex]
        \end{aligned}
        \end{equation*}}
\end{problemenv}

\pagebreak[2]

\noindent In contrast, in the pessimistic setting, an optimal follower's solution that is worst for the leader is chosen.
However, for partitioned-items problems, it suffices to consider one of the settings, as there exist polynomial transformations between them; see \cref{lem:trans-bil-opt-pes} in the appendix.

\paragraph{Response problems}
A \emph{response problem} asks whether there exists a leader's action that makes the follower choose a reaction containing some given mandatory items while avoiding some given forbidden items.
More precisely, the follower's item set $I_f$ is partitioned into \emph{mandatory items}~$I_1$, \emph{forbidden items}~$I_0$, and \emph{neutral items}~$I_*$.
In the optimistic setting, 
the problem is defined as: 
\begin{problemenv}
    \problemtitle{Response Problem: \rp{\problem}}
    \probleminput{An instance~$(I,\feas,c)$ of \problem, a partition~$I=I_\ell\ \dot\cup\ I_f$, a  function $d\colon I_f\to \Q$, and a partition~$I_f=I_0\ \dot\cup\ I_1\ \dot\cup\ I_*$.}
    \problemgoal{Decide whether there exists a leader's action $\LL\subseteq I_\ell$ such that there is an optimal follower's reaction $\FF$ to $\LL$ in \bil{\problem} that satisfies~$I_0\cap \FF=\emptyset$ and~$I_1\subseteq\FF$.}
\end{problemenv}

\noindent In other words, we want to decide whether the leader can make the follower choose a solution belonging to the \emph{target set} defined by
$\target\coloneqq \{\FF\subseteq I_f\mid I_0\cap\FF=\emptyset,~I_1\subseteq \FF\}$. 
Note that~$|\target|=2^{|I_*|}$, so that~$|\target|=1$ if and only if~$I_f=I_0\cup I_1$. 
We refer to the latter case as a \emph{complete response} problem, otherwise we speak about 
a \emph{partial response} problem.

\bigskip

The response problem is a special case of the decision version of the partitioned-items problem mentioned above:
Given an instance of the response problem \rp{\problem}, where \problem is a maximization problem and~$I=I_\ell\ \cup\ (I_0\ \cup\ I_1\ \cup\ I_*)$ is defined as above, we can create an equivalent instance of the partitioned-items problem \bil{\problem} by setting the leader's objective function as follows: 
\[
c(i)\coloneqq \begin{cases}\begin{array}{rl}
    1 & \text{ if } i\in I_1, \\
    -1 & \text{ if } i\in I_0, \\
    0 & \text{ if } i\in I_*\cup I_\ell\;.
    \end{array}
\end{cases}
\]
Then, the task is to verify whether the leader can achieve an objective value of at least $|I_1|$.

As for the partitioned-items problem, there is an optimistic and a pessimistic setting for the response problem.
In both settings, for a yes-instance, there exists a leader's action~$\LL$ to which the follower has at least one feasible response. 
In the optimistic setting, we additionally require that 
at least one of the optimal follower's reactions to~$\LL$ belongs to the target set~$\target$.
In the pessimistic setting, all possible optimal follower's reactions to~$\LL$ must belong to~$\target$.
The transformation between optimistic and pessimistic setting for the partitioned-items problems in \cref{lem:trans-bil-opt-pes} directly applies to response problems, as it only modifies the follower's objective.

\begin{corollary} \label[corollary]{cor:trans-rp-opt-pes}
    Given a combinatorial optimization problem~$\problem$, any optimistic (resp.\ pessimistic) instance of~$\rp{\problem}$ can be polynomially transformed into an equivalent pessimistic (resp.\ optimistic) instance of~$\rp{\problem}$ by only transforming the follower's objective function~$d$.
\end{corollary}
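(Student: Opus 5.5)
The plan is to prove \cref{cor:trans-rp-opt-pes} directly, using the perturbation idea behind \cref{lem:trans-bil-opt-pes}. Given an optimistic instance of \rp{\problem} with follower objective $d$ and partition $I_f=I_0\,\dot\cup\,I_1\,\dot\cup\,I_*$, I define the leader's auxiliary objective $c$ as in the reduction above: $+1$ on $I_1$, $-1$ on $I_0$, and $0$ elsewhere. The pessimistic instance keeps $(I,\feas)$, the leader/follower partition and the target partition, and replaces $d$ by
\[
d'(i)\coloneqq \bigl(|I_f|+1\bigr)\,D\,d(i)+c(i),\qquad i\in I_f .
\]
Here $D$ is a common denominator of all values $d(i)$, so that $D\,d(\FF)\in\Z$ for every $\FF\subseteq I_f$. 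All data have polynomial encoding length, so the transformation is polynomial. The reverse direction uses $d'(i)\coloneqq (|I_f|+1)\,D\,d(i)-c(i)$.

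The first step is to check that $d'$ refines $d$ lexicographically. For any $\FF,\FF'\subseteq I_f$ we have $|c(\FF)-c(\FF')|\le |I_f|$. Hence if $d(\FF)>d(\FF')$, then $D(d(\FF)-d(\FF'))\ge 1$, and therefore $d'(\FF)>d'(\FF')$. Fix any leader's action $\LL$. Its feasible follower set $\{\FF\subseteq I_f:\LL\cup\FF\in\feas\}$ is the same in both instances. So the $d'$-optimal reactions to $\LL$ are exactly those $d$-optimal reactions that maximize $c$.

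The second step is the key observation about $c$. A set $\FF\subseteq I_f$ lies in $\target$ if and only if $c(\FF)=|I_1|$, and otherwise $c(\FF)<|I_1|$. Now suppose some $d$-optimal reaction to $\LL$ lies in $\target$ (optimistic yes). Then the maximum of $c$ over the $d$-optimal reactions is $|I_1|$. Every $d'$-optimal reaction therefore attains $c=|I_1|$, i.e., lies in $\target$. Moreover at least one such reaction exists, because a feasible response exists. So $\LL$ witnesses a pessimistic yes.

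Conversely, suppose every $d'$-optimal reaction to $\LL$ lies in $\target$ and one exists. That reaction is also $d$-optimal, so $\LL$ witnesses an optimistic yes. The transformation from pessimistic to optimistic is symmetric with $-c$: the $d'$-optimal reactions are then the $d$-optimal ones minimizing $c$. All of them lie in $\target$ if and only if the minimum of $c$ over them is $|I_1|$, which is exactly the condition that some $d'$-optimal reaction lies in $\target$.

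The only delicate point is the scaling. The factor must be large enough that the $c$-perturbation never overturns a strict $d$-comparison, yet the resulting $d'$ must still have polynomial encoding length. Clearing denominators with $D$ and multiplying by $|I_f|+1$ achieves both. Only $d$ changes, as the statement requires.
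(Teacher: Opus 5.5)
Your proof is correct and takes essentially the paper's route. The paper treats \rp{\problem} as the special case of \bil{\problem} with leader objective $c=+1$ on $I_1$, $-1$ on $I_0$, $0$ elsewhere (threshold $|I_1|$), and applies the perturbation $\tilde d = d \pm \epsilon\, c|_{I_f}$ from \cref{lem:trans-bil-opt-pes}; your $d'$ is a positive rescaling of exactly that perturbation (integral $Dd$ with $\epsilon = 1/(|I_f|+1)$), except that you verify the equivalence directly on the target set rather than through preservation of the leader's objective value.
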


\noindent
As the optimistic and pessimistic setting of the response problem are equivalent when there is no restriction on the follower's objective, their computational complexity does not differ. 
Thus, from here on, we only study the optimistic setting of the response problem. 

Due to the response problem being a special case of the partitioned-items problem,
the computational complexity of the response problem can only rise by at most one level within the polynomial hierarchy compared to the underlying problem; see \cref{lem:complexity-bil}.

\begin{corollary}\label[corollary]{cor:complexity-rp}
    $\rp{\problem}\in\np^\problem$ for any combinatorial optimization problem~$\problem$.
\end{corollary}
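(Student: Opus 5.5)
The claim $\rp{\problem}\in\np^\problem$ follows by certificate-and-verify. A nondeterministic polynomial-time machine guesses a certificate, and a polynomial number of oracle calls to \problem then suffice to check it. The certificate is a leader's action $\LL\subseteq I_\ell$ together with a follower's reaction $\FF\subseteq I_f$. Both have size linear in $|I|$, so they can be guessed in polynomial time.

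For verification, first check $\FF\in\target$, i.e.\ $I_0\cap\FF=\emptyset$ and $I_1\subseteq\FF$; this takes linear time. Next check $\LL\cup\FF\in\feas$. This needs a feasibility test for \problem. Where $\feas$ is given implicitly, I would obtain it from the oracle as in the second step below, or assume, as is standard for the classes considered, that membership in $\feas$ can be decided in polynomial time relative to \problem.

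The main step is checking that $\FF$ is \emph{optimal} for the follower given $\LL$, i.e.\ $d(\FF)=\max\{d(\FF')\mid \FF'\subseteq I_f,\ \LL\cup\FF'\in\feas\}$. I would reduce this follower problem to a single instance of \problem on the same ground set by fixing the leader's choice through the weights. Let $\bigM$ exceed $\sum_{i\in I_f}|d(i)|$ by a sufficient margin. Define the weight $c'$ by $c'(i)=\bigM$ for $i\in\LL$, $c'(i)=-\bigM$ for $i\in I_\ell\setminus\LL$, and $c'(i)=d(i)$ for $i\in I_f$. A maximizer of $(I,\feas,c')$ then selects exactly $\LL$ among the leader's items, provided some feasible solution does so. Its value, after subtracting $\bigM|\LL|$, equals the follower's optimum, and the condition can be read off the oracle's answer. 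Since $\LL\cup\FF\in\feas$ was already checked, the restricted feasible set is nonempty, so this condition is well defined.

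Alternatively, and more cleanly, one can simply invoke \cref{lem:complexity-bil}. The response problem is a special case of the decision version of \bil{\problem} via the leader objective $c\in\{1,-1,0\}$ defined above with threshold $|I_1|$. That reduction is polynomial, and $\np^\problem$ is closed under such reductions, so the claim follows directly.

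The main obstacle is the big-$\bigM$ encoding. It must keep all encoding lengths polynomial, which holds because $\bigM$ has polynomially many bits. It also presupposes that the oracle for \problem accepts arbitrary rational weights on the fixed $(I,\feas)$. Both points are exactly what \cref{lem:complexity-bil} already handles, so in the write-up I would lean on that lemma and the reduction above rather than redo the oracle construction.
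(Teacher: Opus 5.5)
Your proposal is correct and matches the paper's argument: the corollary follows from \cref{lem:complexity-bil} by encoding the response problem as the decision version of \bil{\problem} with leader weights $1,-1,0$ on $I_1$, $I_0$, $I_*\cup I_\ell$ and threshold $|I_1|$. Your direct certificate-and-verify route is also sound and is essentially the paper's proof of that theorem specialized to this case, with one small simplification: guessing $\FF$ together with $\LL$ removes the need for the $\epsilon$ tie-breaking term.
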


\begin{example}\label[example]{ex:selection}
    To illustrate the response problem, let us consider \textsc{Selection} as the underlying problem~\problem. We are then given a set~$I$ of~$n$ items along with a weight function~$c\colon I \to\Q$ and some scalar~$p\in\N$, defining the feasible set~$\feas \coloneqq \{S\subseteq I \colon |S|=p\}$, 
    and the task in \problem is to select a subset~$S\subseteq I$ of exactly $p$ items that maximizes the weight~$c(S)$. 

    We claim that \rp{Selection} is solvable in $\mathcal{O}(n)$ time:
    For an instance \instance of \rp{Selection}, let $\pi$ be a non-increasing ordering of $I_f$ \wrt $d$, \ie let $d(i_{\pi(1)}) \geq \dots \geq d(i_{\pi(|I_f|)})$.
    For items with the same weight, let the mandatory items appear first, followed by neutral, and then followed by forbidden items.
    Let $i_{\pi(a)}$ be the last mandatory and~$i_{\pi(b)}$ the first forbidden item in the ordering.
    We claim that \instance is a yes-instance if and only if $p\geq a$, $p-|I_\ell|<b$, and $a<b$.

    Indeed, if $\instance$ is a yes-instance and $\LL\subseteq I_\ell$ is a corresponding leader's decision, the follower reacts by selecting the first $p-|\LL|$ items according to the ordering $\pi$, which then contain all mandatory and no forbidden items, so $a \leq p-|\LL| < b$.
    Since $0\leq |\LL|\leq |I_\ell|$, this implies $p\geq a$, $p-|I_\ell|<b$, and $a<b$.
    Conversely, let $p\geq a$, $p-|I_\ell|<b$, and $a<b$ hold.
    Let the leader select any $p-b+1\leq|I_\ell|$ items from $I_\ell$.
    Then the follower selects the first $b-1$ items from the ordering $\pi$.
    In particular, since $p \geq a$ and $a<b$, the follower selects all mandatory items and none of the forbidden items.
    Thus, $\instance$ is a yes-instance of the \rp{Selection} problem.

    Consequently, \rp{Selection} reduces to computing the positions~$a$ and~$b$. This is easy after sorting in~$\mathcal{O}(n\log n)$ time, but can also be done in linear time: To compute~$a$, it suffices to scan all mandatory items in any order, always updating the item~$i'$ with the smallest~$d(i')$, and then set~$a\coloneqq|I_1|+|\{i\in I_*\cup I_0\mid d(i)>d(i')\}|$. 
    The value~$b$ can be computed analogously. \qed
\end{example}

\Cref{ex:selection} shows that, depending on the underlying problem,~\rp{\problem} can also be~\problem-easy, so that \Cref{cor:complexity-rp} only provides an upper bound on the complexity of~\rp{\problem}.
In fact, solving~\rp{Selection} is not only possible in linear time but even surprisingly straightforward compared to the well-known linear-time \emph{median-of-medians} algorithm for \textsc{Selection}~\cite{BFPRT73_TimeBoundsSelection}.

Moreover, special cases of the response problem \rp{\problem} can belong to lower levels of the polynomial hierarchy than the bilevel problem \bil{\problem} over the same underlying combinatorial problem~\problem, or even 
than 
\problem itself, as the following example shows.

\begin{example}\label[example]{ex:knapsack}
    Consider the binary knapsack problem {\sc Knapsack}.
    We claim that \rp{Knapsack} with $I_f=I_1$ is in $\p$, while the general complete \rp{Knapsack} is in~$\p^\np$. Partial \rp{Knapsack} is even $\Sigma_2^\p$-hard; this follows from the proof of Theorem~3.1\,(b) in~\cite{CCLW14_ComplexityBilevelKnapsack}. 
    
    To show the first two claims, let a complete \rp{Knapsack} instance with weights~$a_i$,~$i\in I$, and a weight limit~$b$ be given and define~$b_1\coloneqq\sum_{i\in I_1}a_i$. 
    If the follower is given a weight limit of~$b_1$ and does not pack exactly~$I_1$, the instance is a no-instance. 
    Otherwise, there exists a~$b_{\max}\in (b_1,\infty]$ such that the follower packs exactly~$I_1$ for any weight limit in~$[b_1,b_{\max})$, while for any larger weight limit he will choose another solution.

    If all follower's items are mandatory, then $b_{\max}=\infty$.    
    A complete \rp{Knapsack} instance with~$I_f=I_1$ is a yes-instance if and only if the empty leader's choice is feasible.
    Thus, it suffices to check whether $b_1 \leq b$ and all follower's items have non-negative follower's values.

    For the general complete \rp{Knapsack}, we solve two knapsack problems.
    First, compute the optimal value $b_\ell$ of a knapsack problem on only the leader's items with weights and costs~$a_i$ and weight limit~$b-b_1$. 
    In other words, determine the minimum space possible to leave to the follower that is at least~$b_1$. 
    Next, let the follower solve his knapsack problem with the remaining capacity~$b-b_\ell$. 
    Then, the instance is a yes-instance if and only if the follower chooses exactly the mandatory items~$I_1$. 
    Note that this algorithm does not require to compute~$b_{\max}$ but only relies on its existence.
    \qed
\end{example}

In the following section, we present an in-depth complexity study of the response problem for maximum-weight and perfect matchings. 
We will see that the hardness strongly depends on the conditions posed on the sets~$I_0$, $I_1$, and~$I_*$. 
Along these lines, we first state two simple observations that are independent of the underlying problem~\problem.

\begin{proposition}\label[proposition]{pro:polytime:I_f=I_*}
    If~$I_f=I_*$, then \rp{\problem} is equivalent to deciding whether \problem admits any feasible solution. 
\end{proposition}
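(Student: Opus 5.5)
The plan is to show both directions of the equivalence directly from the definition of \rp{\problem}. The key observation is that when $I_f=I_*$ we have $I_0=I_1=\emptyset$. Hence the target set is $\target=2^{I_f}$, and every follower's response belongs to it. The question then reduces to whether some leader's action $\LL\subseteq I_\ell$ admits at least one optimal follower's reaction at all. In the optimistic setting, which we may assume by \cref{cor:trans-rp-opt-pes}, this is just the existence of an optimal reaction.

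For the forward direction, suppose the instance is a yes-instance. Then there are $\LL\subseteq I_\ell$ and an optimal follower's reaction $\FF\subseteq I_f$ with $\LL\cup\FF\in\feas$. So $\feas\neq\emptyset$, and \problem admits a feasible solution.

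For the reverse direction, let $S\in\feas$ be any feasible solution and set $\LL\coloneqq S\cap I_\ell$. The follower's feasible set $\{\FF'\subseteq I_f\mid \LL\cup\FF'\in\feas\}$ contains $S\cap I_f$ and is therefore nonempty. It is also finite, being a subset of $2^{I_f}$, so the $\argmax$ of $d$ over it is attained by some $\FF$. This $\FF$ trivially satisfies $I_0\cap\FF=\emptyset$ and $I_1\subseteq\FF$, so the instance is a yes-instance.

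There is no real obstacle here. The only points needing care are finiteness, which guarantees that the follower's maximum exists, and noting that the equivalence is independent of $c$ and $d$. In the pessimistic setting the argument is identical, because every optimal reaction lies in $\target=2^{I_f}$.
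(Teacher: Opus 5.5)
Your proof is correct and follows the same idea as the paper's: since $I_0=I_1=\emptyset$, the target set is all of $2^{I_f}$, so the instance is a yes-instance exactly when $\feas\neq\emptyset$. You spell out the two directions that the paper leaves implicit, in particular that $S\cap I_\ell$ is a leader's action whose follower-feasible set is nonempty and finite, so an optimal reaction exists.
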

\begin{proof}
    Essentially, the leader does not care which follower's items are included.
    Thus, an instance of \rp{\problem} is a yes-instance if and only if its feasible set $\feas$ is non-empty.
\end{proof}
Moreover, the response problem is oracle-fixed-parameter tractable (oracle-\fpt) \wrt the number of leader's items:
If~$|I_\ell|\le k$ for some parameter $k$, the number of possible leader's solutions~$\LL \subseteq I_\ell$ is at most~$2^k$. 
A certificate $\LL$ can be found by enumeration based on \Cref{lem:complexity-bil}.
\begin{corollary}\label[corollary]{cor:ftp-rp}
    Given an oracle for~\problem, the problem \rp{\problem} is oracle-\fpt with respect to~$|I_\ell|$.
\end{corollary}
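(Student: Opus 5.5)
The plan is to enumerate all at most $2^{|I_\ell|}\le 2^k$ leader's actions $\LL\subseteq I_\ell$ and, for each one, decide with polynomially many oracle calls whether some optimal follower's reaction to $\LL$ lies in the target set $\target$. The instance is a yes-instance exactly when at least one $\LL$ passes this test. The total running time is then $2^k$ times a polynomial in the input size, counting oracle calls as unit cost, which is precisely oracle-\fpt.

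To test a fixed $\LL$, I would follow the check used for \cref{lem:complexity-bil}, which we assume is available. In the first oracle call, solve the follower's problem for $\LL$: maximize $d(\FF')$ over $\FF'\subseteq I_f$ with $\LL\cup\FF'\in\feas$. This can be phrased as an instance of \problem\ whose objective equals $d$ on $I_f$ and is a large constant $\bigM$ on $\LL$, with $-\bigM$ on $I_\ell\setminus\LL$, so that the leader's part is forced. This yields the follower's optimal value $d^*$, or reports that no feasible response exists, in which case $\LL$ is rejected.

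In the second oracle call, add a perturbation that rewards $I_1$ and penalizes $I_0$. Set the objective to $\bigM$ on $\LL$ and $-\bigM$ on the rest of $I_\ell$. On a follower item $i$, scale the weight to $D\,d(i)+\epsilon_i$, where $\epsilon_i=1$ for $i\in I_1$, $\epsilon_i=-1$ for $i\in I_0$, and $\epsilon_i=0$ for $i\in I_*$. The scaling factor $D$ should exceed $|I_f|$ times the common denominator of the values $d$. With this choice the perturbation cannot change which solutions are $d$-optimal; it only breaks ties among them in favour of $\target$. Consequently, $\LL$ works if and only if the optimal perturbed value equals $D\,d^*+|I_1|+\bigM|\LL|$. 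An equivalent way to state the test is that the returned optimal $\FF$ contains $I_1$, avoids $I_0$, and has $d(\FF)=d^*$.

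The main obstacle is justifying these encodings, namely that the scaled and perturbed costs stay polynomially sized and that the big-$\bigM$ trick correctly fixes the leader's choice. Since \cref{lem:complexity-bil} already provides this verification of a given certificate $\LL$ with an oracle for \problem, I would simply invoke it and keep the write-up to the enumeration argument.
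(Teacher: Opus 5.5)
Your proposal is correct and follows the paper's argument: enumerate the at most $2^{k}$ leader's actions $X\subseteq I_\ell$ and verify each candidate with polynomially many oracle calls using the certificate check from \cref{lem:complexity-bil}. That check applies to the response problem through its reformulation as a partitioned-items problem with leader weights $+1$ on $I_1$, $-1$ on $I_0$, and threshold $|I_1|$; your explicit two-call test, which scales $d$ by $D$ instead of perturbing it by $\epsilon c$, is just an integer-scaled variant of the same big-$K$ and tie-breaking construction.
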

\section{Matching Response Problems}
\label{Sec:Matching}

We now focus on the maximum-weight matching (\maxM) and the minimum-weight perfect matching (\perfectM) problems, and study the complexity of their corresponding response problems, \rp{\maxM} and \rp{\perfectM}.
In both problems, we are given a graph~$G$ with edges partitioned into leader's and follower's edges as well as a follower's weight function~$d$.
Follower's edges are further partitioned into mandatory, forbidden, and neutral edges.
In \rp{\maxM}, the feasible set~$\feas$ consists of all matchings of~$G$, while it only contains perfect matchings, \ie matchings that cover all vertices of~$G$, in \rp{\perfectM}.
The response problems ask whether the leader can choose a matching among her edges such that the follower can extend it by choosing a matching including \emph{all} mandatory and \emph{no} forbidden edges.

Due to~\Cref{cor:complexity-rp}, both \rp{\maxM} and \rp{\perfectM} belong to \np.
However, their corresponding response problems may have different complexity, in contrast to \maxM\ and \perfectM\ being equivalent as single-level problems; see, e.g., \cite[Proposition~11.1]{KorteVygen_CombOpt}.
Throughout this section, all results concerning intractability are shown on bipartite graphs and all results concerning tractability are shown on general graphs, so they all hold for both settings.
We denote the matching problems on bipartite graphs by \textsc{\maxBM} and \textsc{\perfectBM}.
An overview of our results on response problems for matching problems can be found in~\Cref{tab:overview} in Section~\ref{Sec:Conclusions}.

\subsection{Complete Response Setting}

We first consider complete responses with different combinations of forbidden and mandatory sets.
To begin with, we consider the case where all the  follower's edges are mandatory.

\begin{proposition}\label[proposition]{prop:polytime:I_f=I_1}
   If~$I_f=I_1$, then \rp{\maxM} is polynomial-time solvable.
\end{proposition}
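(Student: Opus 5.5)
The plan is to show that the empty leader's action is always a best choice, which reduces the question to a simple check. Concretely, I aim to prove that an instance with $I_f=I_1$ is a yes-instance if and only if (i) the follower's edge set $I_f$ is itself a matching in $G$, and (ii) $d(e)\ge 0$ for every $e\in I_f$. Both conditions can be checked in linear time, which gives the claim.

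\emph{Necessity.} Suppose some leader's action $\LL$ admits an optimal follower's reaction $\FF$ in the target set. Since $I_0=I_*=\emptyset$, the target set is $\target=\{I_f\}$, so $\FF=I_f$. Feasibility then requires $\LL\cup I_f$ to be a matching, and in particular $I_f$ must be a matching, giving (i). It also forces $\LL$ to cover no vertex covered by $I_f$. Consequently, for every subset $\FF'\subseteq I_f$ the set $\LL\cup\FF'$ is again a matching, so every such $\FF'$ is a feasible follower's reaction. If some $e\in I_f$ had $d(e)<0$, then $I_f\setminus\{e\}$ would be feasible with strictly larger follower's value. This contradicts optimality of $I_f$ and gives (ii).

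\emph{Sufficiency.} Let the leader choose $\LL=\emptyset$. The follower's feasible reactions are then exactly the matchings contained in $I_f$. By (i), these are all subsets of $I_f$. By (ii), $I_f$ maximizes $d$ among them, possibly with ties, so $I_f$ is an optimal reaction. In the optimistic setting it may be selected, so the instance is a yes-instance.

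The only step needing care is the necessity direction. There I must argue that the leader cannot help by restricting the follower, because every follower's edge is already in the target and must therefore remain available. The key observation is that $\LL$ must avoid $V(I_f)$, so it cannot restrict the follower's choices at all. I also need to confirm that ties in $d$ (edges with $d(e)=0$) are harmless in the optimistic setting, which they are.
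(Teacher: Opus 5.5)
Your proposal is correct and matches the paper's proof, which consists of exactly your characterization: the instance is a yes-instance if and only if $I_f$ is a matching and $d(e)\ge 0$ for all $e\in I_f$. Your argument supplies the justification the paper leaves implicit, namely $\LL=\emptyset$ as the witness for sufficiency, and for necessity the observation that $\LL$ must avoid $V(I_f)$, so every subset of $I_f$ stays feasible.
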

\begin{proof}
    An instance of \rp{\maxM} with~$I_f=I_1$ is a yes-instance if and only if~$I_f$ is a matching and all edges have non-negative weight \wrt $d$.
\end{proof}

While the complete \rp{\maxM} with only mandatory edges hence turns out to be solvable in polynomial time, in stark contrast to that, \rp{\maxM} with only forbidden follower's edges is already \np-hard.
Besides, it is hard even on the special case of bipartite graphs.
To show this hardness result,
we give a polynomial transformation from the following problem.

\begin{problemenv}
	\problemtitle{\BCBS (\bcbs)}
	\probleminput{A bipartite graph $H$ 
    and~$k\in\N$.}
	\problemgoal{Decide whether $H$ contains a complete bipartite subgraph with $k$ vertices on each side of the bipartition.}
\end{problemenv}
\noindent
\BCBS is \np-hard~\cite{GarJoh1979, Johnson1987}.

\begin{theorem}\label{thm:problembm-np-hard}
    \rp{\maxBM} is \np-complete even when~$I_f=I_0$.
\end{theorem}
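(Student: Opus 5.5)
We show membership in \np\ and then give a polynomial transformation from \bcbs. Membership is immediate from \Cref{cor:complexity-rp}, since \maxBM\ is polynomial-time solvable, so $\np^{\maxBM}=\np$. For hardness, the first step is to understand instances with $I_f=I_0$ and strictly positive follower weights $d\equiv 1$. The follower must pick a maximum-weight matching among his edges whose endpoints are not covered by the leader's matching $\LL$. The empty set is his (unique) optimal reaction exactly when no follower edge has both endpoints uncovered by $\LL$. If some follower edge is free, he gains by taking it; otherwise the empty set is his only feasible reaction. Hence, with $d\equiv 1$, the instance is a yes-instance if and only if some leader matching $\LL$ covers at least one endpoint of every follower edge.

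Given a \bcbs\ instance $(H,k)$ with bipartition $L\,\dot\cup\,R$, we construct $G$ as follows; if $k>\min\{|L|,|R|\}$, we output a trivial no-instance. The follower's edges are the bipartite complement of $H$, namely all pairs $uv$ with $u\in L$, $v\in R$ and $uv\notin E(H)$; they are all forbidden and get weight $1$. We add new vertex sets $R'$ with $|R'|=|L|-k$ and $L'$ with $|L'|=|R|-k$. The leader's edges form the complete bipartite graphs between $L$ and $R'$ and between $L'$ and $R$. The graph is bipartite with sides $L\cup L'$ and $R\cup R'$, and its size is polynomial in that of $(H,k)$.

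For correctness, suppose first that $A\subseteq L$ and $B\subseteq R$ with $|A|=|B|=k$ induce a complete bipartite subgraph of $H$. The leader matches $L\setminus A$ perfectly to $R'$ and $R\setminus B$ perfectly to $L'$; the cardinalities fit exactly. Any follower edge $uv$ is a non-edge of $H$, so $u\notin A$ or $v\notin B$, and thus one endpoint is covered. By the first step, the follower's optimal reaction is empty and lies in $\target$.

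Conversely, let $\LL$ be a leader's matching such that every follower edge has a covered endpoint. Vertices of $L$ can only be matched into $R'$, so at most $|L|-k$ of them are covered, and the uncovered set $U_L\subseteq L$ has $|U_L|\ge k$. Symmetrically, the uncovered set $U_R\subseteq R$ has $|U_R|\ge k$. No follower edge joins $U_L$ and $U_R$, so every pair between them is an edge of $H$. Any $k$-subsets of $U_L$ and $U_R$ therefore form the required balanced complete bipartite subgraph.

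The main subtlety is the first step: the follower's reaction with $I_f=I_0$ must be characterized exactly. Choosing $d\equiv 1$ is what makes a nonempty free follower edge always strictly improving, so the empty response is optimal precisely in the vertex-cover situation. The remaining difficulty is forcing at least $k$ uncovered vertices on each side. The bottleneck sets $R'$ and $L'$ achieve this without interfering with the follower, since no follower edge is incident to them.
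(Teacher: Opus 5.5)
Your proposal is correct and is essentially the paper's proof. It uses the same construction (the bipartite complement of $H$ as forbidden follower edges with $d\equiv 1$, plus leader-controlled bottleneck sets of sizes $|L|-k$ and $|R|-k$ completely joined to $L$ and $R$, respectively) and the same argument that the follower's optimal response is empty exactly when the leader's matching covers an endpoint of every follower edge. You also make explicit the membership in \np\ via \Cref{cor:complexity-rp} and the degenerate case $k>\min\{|L|,|R|\}$, both of which the paper leaves implicit.
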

\begin{proof}
    Given an instance $\instance = (H, k)$ of \bcbs, with~$H=(V_H,E_H)$ and bipartition~$V_H = V_H^-\ \dot\cup\ V_H^+$, 
	we construct an instance of \rp{\maxBM} as follows; see~\Cref{fig:response-max-bipartitematching} for an illustration:
    We first set~$n^-\coloneqq |V_H^-|-k$ and~$n^+\coloneqq |V_H^+|-k$ and define the bipartite graph~$G~=~(V^-\ \dot\cup\ V^+,E)$ by
    \[
    V^-\coloneqq V_H^-\cup\{v_1^-,\dots,v_{n^+}^-\},\quad V^+\coloneqq V_H^+\cup\{v_1^+,\dots,v_{n^-}^+\},
    \]
    and~$E=I_\ell\cup I_f$ with
    \begin{eqnarray*}
    I_\ell & \coloneqq  & \{(u,v_i^+)\mid u\in V_H^-,i\in\numset{n^-}\}\cup\{(v_i^-,u)\mid i\in\numset{n^+},u\in V_H^+\},\\
    I_f & \coloneqq  & \{(u,v)\mid u\in V^-_H, v\in V^+_H, (u,v)\not\in E_H\},\;
    \end{eqnarray*}
    where $\numset{n} \coloneqq \set{1, \dots, n}$ for $n \in \N$.
    Moreover, we define~$I_0\coloneqq I_f$, $I_1\coloneqq I_*\coloneqq \emptyset$, and $d(e)\coloneqq 1$ for all~$e\in I_f$.
    By construction, any matching~$M\subseteq I_\ell$ covers at most~$n^-$ vertices from~$V^-$ and at most~$n^+$ vertices from~$V^+$.
	Thus, at least~$k$ vertices from $V^-$ and at least~$k$ vertices from~$V^+$ are not covered by any matching~$M$. 
    
    For a yes-instance of \rp{\maxBM}, there exists a matching $M$ such that the vertices not covered by~$M$ do not share any edge.
    Otherwise, the follower would respond with a non-empty matching. 
    Then, the corresponding vertices in~$H$ form a complete bipartite subgraph, \ie the instance~$\instance$ of \bcbs is a yes-instance.
    
    Conversely, assume that $\instance$ is a yes-instance of \bcbs, \ie let $H$ contain a complete bipartite subgraph with~$k$ vertices on each side. 
    Then, the corresponding vertices do not share any edge in~$G$, and the leader can choose~$n^-+n^+$ of her edges that cover all remaining vertices of~$H$. 
    Consequently, the follower cannot extend this matching by any other edge, since the only remaining uncovered vertices form an independent set. Thus, the constructed instance of \rp{\maxBM} is a yes-instance.
\end{proof}
	
\begin{figure}[t]
	\begin{center}
	\begin{tikzpicture}[yscale=.9,scale=0.8]
        \def\dis{5}
        \begin{scope}
            \node at (0, 10.9) {$V_H^-$};
            \node[tiny vertex] (a1) at (0, 10) {};
            \node[tiny vertex] (a2) at (0, 9) {};
            \node[tiny vertex] (a3) at (0, 8) {};
            \node[tiny vertex] (a4) at (0, 7) {};
            \node at (0, 6) {$\vdots$};
            \node[tiny vertex] (an) at (0, 5) {};
            \draw[box] ($(a1)+(-.3, .3)$) rectangle ($(an)+(.3, -.3)$);	
            \node at (\dis, 10.9) {$V_H^+$};
            \node[tiny vertex] (a1') at (\dis, 10) {};
            \node[tiny vertex] (a2') at (\dis, 9) {};
            \node[tiny vertex] (a3') at (\dis, 8) {};
            \node[tiny vertex] (a4') at (\dis, 7) {};
            \node at (\dis, 6) {$\vdots$};
            \node[tiny vertex] (an') at (\dis, 5) {};
            \draw[box] ($(a1')+(-.3, .3)$) rectangle ($(an')+(.3, -.3)$);
            \node at (\dis+1.5,7.5) {$\longrightarrow$};
            \draw (a1) -- (a2');
            \draw (a1) -- (a4');
            \draw (a2) -- (a3');
            \draw (a2) -- (a4');
            \draw (a2) -- (an');
            \draw (a3) -- (a2');
            \draw (a3) -- (a3');
            \draw (a3) -- (a4');
            \draw (a4) -- (a2');
            \draw (a4) -- (an');
            \draw (an) -- (a1');
            \draw (an) -- (a3');
        \end{scope}
        \begin{scope}[xshift=10.75cm]    
            \path[rounded corners,fill=gray!15] (-.7, 11.3) rectangle (\dis.7, 4.5);

            \foreach \i in {1}{ 
                \node[tiny vertex] (c\i) at (-2, 11-\i) {};
                \node at ($(c\i)+(-0.8, 0)$)  {$v^+_{\i}$};
            }
            \foreach \i in {1.7}{ 
                \node[tiny vertex] (c\i) at (-2, 11-\i) {};
                \node at ($(-2.8, 9.3)$)  {$v^+_2$};
            }
            \node at (-2, 8.8) {$\vdots$};
            \foreach \i in {3}{ 
                \node[tiny vertex] (c\i) at (-2, 11-\i) {};
                \node at ($(c\i)+(-0.8, 0)$) {$v^+_{n^-}$};
            }
            \draw[box] ($(-2,10)+(-.3, .3)$) rectangle ($(-2, 8)+(.3, -.3)$);
            \draw[curly,leadercolor] ($(-2,8.6)+(.3, 0)$) -- (-.3, 8.6);
            
            \foreach \i in {1}{ 
                \node[tiny vertex] (d\i) at (\dis +2, 11-\i) {};
                \node at ($(d\i)+(0.8, 0)$)  {$v^-_{\i}$};
            }
            \foreach \i in {1.7}{ 
                \node[tiny vertex] (d\i) at (\dis +2, 11-\i) {};
                \node at ($(\dis+1.4+1.4, 9.3)$)  {$v^-_2$};
            }
            \node at (\dis+2, 8.8) {$\vdots$};
            \foreach \i in {3}{ 
                \node[tiny vertex] (d\i) at (\dis +2, 11-\i) {};
                \node at ($(d\i)+(0.8, 0)$)  {$v^-_{n^+}$};
            }
            \draw[box] ($(\dis+2,10)+(-.3, .3)$) rectangle ($(\dis+2, 8)+(.3, -.3)$);
            \draw[curly,leadercolor] ($(\dis+2,8.6)+(-.3, 0)$) -- (\dis+.3, 8.6);

            \node at (0, 10.9) {$V_H^-$};
            \node[tiny vertex] (a1) at (0, 10) {};
            \node[tiny vertex] (a2) at (0, 9) {};
            \node[tiny vertex] (a3) at (0, 8) {};
            \node[tiny vertex] (a4) at (0, 7) {};
            \node at (0, 6) {$\vdots$};
            \node[tiny vertex] (an) at (0, 5) {};
            \draw[box] ($(a1)+(-.3, .3)$) rectangle ($(an)+(.3, -.3)$);	
            \node at (\dis, 10.9) {$V_H^+$};
            \node[tiny vertex] (a1') at (\dis, 10) {};
            \node[tiny vertex] (a2') at (\dis, 9) {};
            \node[tiny vertex] (a3') at (\dis, 8) {};
            \node[tiny vertex] (a4') at (\dis, 7) {};
            \node at (\dis, 6) {$\vdots$};
            \node[tiny vertex] (an') at (\dis, 5) {};
            \draw[box] ($(a1')+(-.3, .3)$) rectangle ($(an')+(.3, -.3)$);
            \draw[followercolor] (a1) -- (a1');
            \draw[followercolor] (a1) -- (an');
            \draw[followercolor] (a1) -- (a3');
            \draw[followercolor] (a2) -- (a2');
            \draw[followercolor] (a2) -- (a1');
            \draw[followercolor] (a3) -- (an');
            \draw[followercolor] (a3) -- (a1');
            \draw[followercolor] (a4) -- (a4');
            \draw[followercolor] (a4) -- (a1');
            \draw[followercolor] (a4) -- (a3');
            \draw[followercolor] (an) -- (a2');
            \draw[followercolor] (an) -- (a4');
            \draw[followercolor] (an) -- (an');
        \end{scope}
    \end{tikzpicture}
	\end{center}
	\caption{An illustration of our polynomial transformation from \bcbs to \rp{\maxBM}. 
	A straight line denotes a single edge, while a wavy line represents a complete bipartite graph between the corresponding vertex sets.
    The edge sets $I_\ell$ and $I_f$ are depicted in blue and red, respectively.}
	\label{fig:response-max-bipartitematching}
\end{figure}

Hence, complete \rp{\maxM} is \np-hard on general graphs.
In particular, the complexity of \rp{\maxM} strongly depends on the type of edges, \ie whether they are mandatory or forbidden. In contrast, we next show that the complete response problem for \emph{perfect} matchings is always tractable. For a set~$I'$ of edges of~$G$, we denote by~$G\setminus V(I')$ the graph resulting from~$G$ when removing all endpoints of edges in~$I'$, together with their incident edges. 
If~$I'$ is a matching of~$G$, then~$G\setminus V(I')$ is just the part of~$G$ not covered by~$I'$. 

\begin{theorem}\label{thm:perfect-no-indifferent-follower}
    If~$I_f=I_0\cup I_1$, then \rp{\perfectM} is polynomial-time solvable.
\end{theorem}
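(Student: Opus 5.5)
In the complete setting the follower's response is forced to be exactly $I_1$. So the plan is to reduce the question to whether the leader can find a perfect matching of the part of $G$ not covered by $I_1$, using her own edges only, and then to check that the follower's optimality condition is automatic.

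First, the trivial checks. If $I_1$ is not a matching, the instance is a no-instance. Otherwise consider $G' \coloneqq G\setminus V(I_1)$, restricted to leader's edges. Let $\LL\subseteq I_\ell$ be any perfect matching of $G'$ that uses leader's edges only. Then $\LL\cup I_1$ is a perfect matching of $G$.

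The key claim is that, given such an $\LL$, the follower's feasible reactions consist of exactly one set, namely $I_1$. Indeed, $\LL$ covers every vertex not covered by $I_1$. Any follower's set $\FF'$ with $\LL\cup\FF'$ a perfect matching must therefore cover exactly the vertex set $V(I_1)$, using follower's edges only. That alone does not force $\FF'=I_1$, since $I_0$ edges might also lie inside $V(I_1)$.

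So the feasible reactions to $\LL$ are precisely the perfect matchings of $G[V(I_1)]$ using follower's edges. These sets do not depend on which $\LL$ the leader picks, as long as $\LL$ covers exactly the complement of $V(I_1)$. Any leader's action leading to a yes-instance must be of this form, because the response $I_1$ has to be feasible, so $\LL$ must perfectly match $V\setminus V(I_1)$.

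The algorithm is therefore:
\begin{enumerate}
\item[(i)] Check that $I_1$ is a matching.
\item[(ii)] Check that the leader's edges within $G\setminus V(I_1)$ admit a perfect matching, by a polynomial-time matching algorithm.
\item[(iii)] Check that $I_1$ is an optimal (minimum $d$-weight) perfect matching of the graph on $V(I_1)$ whose edges are the follower's edges inside $V(I_1)$, by one minimum-weight perfect matching computation comparing the optimum value with $d(I_1)$.
\end{enumerate}
We are in the optimistic setting, so a tie is fine. Correctness follows from the independence observation above. The instance is a yes-instance if and only if all three checks succeed.

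The main subtlety, which I expect to be the obstacle, is justifying that the choice of $\LL$ is irrelevant: the follower's problem decouples completely once $\LL$ saturates $V\setminus V(I_1)$. This is exactly where perfect matchings differ from \maxM (cf.\ \cref{thm:problembm-np-hard}). In \maxM, the leader can leave vertices uncovered, which makes the follower's options depend on $\LL$.
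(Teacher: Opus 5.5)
Your proposal is correct and follows essentially the same route as the paper. The leader needs a perfect matching of $G\setminus V(I_1)$ using her own edges, and then one checks that $I_1$ is an optimal perfect matching among the follower's edges on the remaining vertex set $V(I_1)$, which, as you rightly note, does not depend on the leader's choice. The only blemish is your first ``key claim'' that $I_1$ is the unique feasible reaction: it is false, as you observe yourself. State the corrected version directly instead, namely that the feasible reactions are exactly the perfect matchings of $G[V(I_1)]$ using follower's edges.
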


\begin{proof}
    If each follower's edge is either forbidden or mandatory, it is clear for every vertex whether the leader or the follower has to cover it for obtaining a perfect matching.
    The problem reduces to determining the existence of two minimum-weight perfect matchings in two disjoint graphs:
    First, the leader has to exactly cover all vertices that are not incident to mandatory edges, \ie we need a perfect matching $\LL$ on the leader's edges in the graph~$G\setminus V(I_1)$.
    Second, we need to verify that the follower chooses $I_1$, \ie that $I_1$ is an optimal perfect matching on the follower's edges in the remaining graph $G\setminus V(\LL)$ \wrt $d$.
\end{proof}

\subsection{Partial Response Setting}\label{subsec:partial}
For partial responses, both \rp{\maxBM} and \rp{\perfectBM} are \np-hard, 
even with the slightest restriction of a single forbidden or mandatory edge.
We have already seen in~\Cref{ex:knapsack} that neutral edges can make a response problem significantly harder.

\begin{theorem}\label{thm:np-hard:I_*arbitrary:I_1=1}
    \rp{\maxBM} is \np-hard even when $|I_1|=1$ and~$|I_0|=0$.
\end{theorem}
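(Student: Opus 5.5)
The plan is to reduce again from \BCBS, reusing the construction in the proof of \Cref{thm:problembm-np-hard}. The difference is that the $|I_f|$ forbidden edges of that construction are no longer available. Instead, the single mandatory edge has to play the role of a ``detector'': it should be part of an optimal follower's reaction exactly when the follower's graph left uncovered by the leader contains no edge. Membership in \np\ is already given by \Cref{cor:complexity-rp}, so only hardness has to be shown.

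\textbf{Construction.} Starting from $(H,k)$, I keep $G$, $I_\ell$ and the padding vertices $v_i^\pm$ exactly as before. The former forbidden edges, i.e.\ the non-edges of $H$ between $V_H^-$ and $V_H^+$, become neutral with $d\equiv 1$. I then add two new vertices, $s$ on the $V^-$ side and $t$ on the $V^+$ side; the leader has no edges at either of them. The mandatory edge is $e=(s,t)$, so $I_1=\{e\}$. Finally I add neutral follower's edges $(u,t)$ for every $u\in V_H^-$, and possibly also $(s,w)$ for every $w\in V_H^+$. All of these edges keep the graph bipartite.

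\textbf{Key claim.} Let $R$ be the complement of $H$ restricted to the vertices not covered by the leader's matching $\LL$. Then $e$ lies in some maximum-weight follower's matching if and only if $R$ has no edge. By the counting argument of \Cref{thm:problembm-np-hard}, $R$ has no edge if and only if the uncovered vertices contain a $K_{k,k}$ of $H$. Both directions of the reduction then follow exactly as in that proof.
\begin{itemize}
\item If $R$ is empty, the follower's options are $e$ alone, or the star edges at $s$ and $t$. The weight $d(e)$ must be chosen so that $e$ alone ties with or beats these options.
\item If $R$ contains an edge $(u,w)$, the weights must make the follower strictly prefer a matching that rematches $t$ (and possibly $s$) through the star edges. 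Such a matching gains at least one unit more than keeping $e$ plus a maximum matching of $R$.
\end{itemize}

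\textbf{Main obstacle.} The hard step is calibrating the weights of $e$ and of the star edges. Comparing the two options gives $d(e)+\nu(R)$ for the matchings containing $e$, against $d_{\mathrm{star}}+\nu(R-u)$ for those using a star edge at an uncovered $u$. Here $\nu(R-u)$ can be as large as $\nu(R)$ or as small as $\nu(R)-1$, depending on whether some maximum matching of $R$ avoids $u$. With a single star on one side this comparison alone cannot distinguish ``$R$ empty'' from ``$R$ nonempty''.

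My first attempt is to use both stars at $s$ and $t$ with different weights. The purpose is that a nonempty $R$ allows the path $s$--$w$--$u$--$t$ to be augmented, namely by replacing $e$ and $(u,w)$ with $(s,w)$ and $(u,t)$. Such an augmentation is only available if some edge of $R$ exists. I would set $d(e)$ just below the sum of two star weights minus one, and check that the case of an empty $R$ still lets $e$ win. This check requires the star edges to be usable only through the uncovered vertices, which the leader-side padding guarantees.

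If the weight balance cannot be made to work, the fallback is to subdivide the star edges. This makes each rematching of $s$ or $t$ cost an $R$-edge, which turns the comparison into an exact augmenting-path criterion.
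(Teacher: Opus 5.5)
There is a genuine gap, and it is exactly the step you flag as the main obstacle. The star gadget around $e=(s,t)$ reacts to edges of $R$ in the wrong direction, so no calibration of $d(e)$ against star weights $a$ (at $t$) and $b$ (at $s$) makes your key claim true.

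Fix a leader's action and let $U^-\subseteq V_H^-$ and $U^+\subseteq V_H^+$ be the uncovered vertices; both are nonempty. Every follower's edge except $e$ and the edges of $R$ is incident to $s$ or $t$. So the best matching containing $e$ has weight $d(e)+\nu(R)$. A matching avoiding $e$ uses at most one star edge at each of $s$ and $t$ plus a matching of $R$ minus two vertices. Hence it weighs at most $a+b+\nu(R)$ (take $a,b\ge 0$; negative star weights are never used), with equality when $R$ is edgeless. Consequently:
\begin{itemize}
\item If $e$ wins for an edgeless $R$, which forces $d(e)\ge a+b$, then $e$ wins for \emph{every} leader's action, and every \bcbs instance is mapped to a yes-instance.
\item If $d(e)<a+b$, the yes-case is lost.
\end{itemize}
Your concrete choice $d(e)<a+b-1$ already fails when $R$ is edgeless, since $\{(u,t),(s,w)\}$ with $u\in U^-$ and $w\in U^+$ outweighs $\{e\}$. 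The alternating cycle $s,w,u,t$ you want to exploit has gain $a+b-d(e)-1$. That is strictly less than the gain $a+b-d(e)$ of the alternating path $w,s,t,u$, which is available when $R$ has no edges. Edges of $R$ compete with the star edges for the same vertices, so they make abandoning $e$ \emph{less} attractive, not more. The subdivision fallback is left unspecified and unargued, so the crux of the reduction is missing.

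The missing idea is a chain of displacements: an edge of $R$ must free a vertex that then steals an endpoint of the mandatory edge. The paper's construction does this as follows.
\begin{itemize}
\item It adds $k$ absorber vertices $p_1,\dots,p_k$, adjacent to all of $V_H^+$ and to $z$, where the mandatory edge is $(z,z')$.
\item It adds edges $(v_i^-,z)$.
\item Complement edges get weight $3$, all auxiliary edges weight $2$, and $(z,z')$ weight $1$.
\end{itemize}
Suppose $(z,z')$ is in an optimal response. Then every $v_i^-$ and every $p_j$ must be covered, since otherwise swapping $(z,z')$ for $(v_i^-,z)$ or $(p_j,z)$ gains $1$. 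So the leader covers exactly $n^+$ vertices of $V_H^+$, and the $p_j$ saturate the remaining $k$. A complement edge $(u,v)$ between uncovered vertices would then give the improving alternating path $u,v,p_j,z,z'$ with gain $3-2+2-1>0$. This forces the uncovered part of $H$ to contain a $K_{k,k}$.

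Conversely, suppose the leader covers everything outside a $K_{k,k}$ of $H$. Then the follower's optimum $2k+1$ is attained by $(z,z')$ together with a perfect matching between the $p_j$ and the remaining $k$ vertices of $V_H^+$.
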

\begin{proof}
    For a \bcbs instance $\instance = (H, k)$ with $H=(V_H, E_H)$ and a bipartition $V_H~=~V_H^-\ \dot\cup\ V_H^+$, we construct an instance of \rp{\maxBM} similarly to the one for~\Cref{thm:problembm-np-hard}:
    We define the bipartite graph~$G=(V^-\ \dot\cup\ V^+,E)$ by
    \[
    V^-\coloneqq  V_H^-\cup\{v^-_1,\dots,v^-_{n^+}\}\cup\{p_1,\dots,p_k\}\cup\{z'\},\quad V^+\coloneqq  V_H^+\cup\{v^+_1,\dots,v^+_{n^-}\}\cup\{z\}
    \]
    and edges~$E=I_\ell\cup I_f$ with
    \begin{eqnarray*}
    I_\ell & \coloneqq & \{(u,v^+_i)\mid u\in V_H^-,i\in\numset{n^-}\}\cup\{(v^-_i,u)\mid i\in\numset{n^+},u\in V_H^+\},\\
    I_f & \coloneqq & \{(u,v)\mid u\in V_H^-,v\in V_H^+, (u,v)\not\in E_H\} \cup \{(p_j,v)\mid j \in \numset{k}, v \in V_H^+\}\\
    & & \cup\, \{(v^-_i,z)\mid i \in \numset{n^+}\} \cup \{(p_j,z)\mid j \in \numset{k}\}\cup\{(z, z')\}.
    \end{eqnarray*}
    Let $I_1 \coloneqq  \set{(z,z')}$, $I_*\coloneqq I_f \setminus I_1$, and $I_0\coloneqq \emptyset$.
    We define the follower's weight function $d\colon I_f\to \Q$ as
    \[
        d(e)\coloneqq \begin{cases}\begin{array}{rl}
            3 & \text{ if } e=(u,v) \text{ with } u\in V_H^-, v\in V_H^+, \\
            1 & \text{ if } e=(z,z'), \\
            2 & \text{ otherwise}.
            \end{array}
        \end{cases}
    \]
    This construction can be done in polynomial time; see~\Cref{fig:np-hard:I_0=1:indif:arbitrary} for an illustration.
    
    \begin{figure}[t]
		\begin{center}
		\begin{tikzpicture}[yscale=.9,scale=0.8]
            \def\dis{5}
            \begin{scope}
                \node at (0, 10.9) {$V_H^-$};
                \node[tiny vertex] (a1) at (0, 10) {};
                \node[tiny vertex] (a2) at (0, 9) {};
                \node[tiny vertex] (a3) at (0, 8) {};
                \node[tiny vertex] (a4) at (0, 7) {};
                \node at (0, 6) {$\vdots$};
                \node[tiny vertex] (an) at (0, 5) {};
                \draw[box] ($(a1)+(-.3, .3)$) rectangle ($(an)+(.3, -.3)$);	
                \node at (\dis, 10.9) {$V_H^+$};
                \node[tiny vertex] (a1') at (\dis, 10) {};
                \node[tiny vertex] (a2') at (\dis, 9) {};
                \node[tiny vertex] (a3') at (\dis, 8) {};
                \node[tiny vertex] (a4') at (\dis, 7) {};
                \node at (\dis, 6) {$\vdots$};
                \node[tiny vertex] (an') at (\dis, 5) {};
                \draw[box] ($(a1')+(-.3, .3)$) rectangle ($(an')+(.3, -.3)$);
                \node at (\dis+1,7.5) {$\longrightarrow$};
                \draw (a1) -- (a2');
                \draw (a1) -- (a4');
                \draw (a2) -- (a3');
                \draw (a2) -- (a4');
                \draw (a2) -- (an');
                \draw (a3) -- (a2');
                \draw (a3) -- (a3');
                \draw (a3) -- (a4');
                \draw (a4) -- (a2');
                \draw (a4) -- (an');
                \draw (an) -- (a1');
                \draw (an) -- (a3');
            \end{scope}
            \begin{scope}[xshift=9.5cm]    
                \path[rounded corners,fill=gray!15] (-.7, 11.3) rectangle (\dis.7, 4.5);

                \foreach \i in {1}{ 
                    \node[tiny vertex] (c\i) at (-2, 11-\i) {};
                    \node at ($(c\i)+(.7, 0.0)$)  {$v^+_{\i}$};
                }
                \foreach \i in {1.7}{ 
                    \node[tiny vertex] (c\i) at (-2, 11-\i) {};
                    \node at ($(-1.3, 9.3)$)  {$v^+_2$};
                }
                \node at (-2, 8.8) {$\vdots$};
                \foreach \i in {3}{ 
                    \node[tiny vertex] (c\i) at (-2, 11-\i) {};
                    \node at ($(c\i)+(0.82, 0)$) {$v^+_{n^-}$};
                }
                \draw[box] ($(-2,10)+(-.3, .3)$) rectangle ($(-2, 8)+(.3, -.3)$);
                \draw[curly,leadercolor] ($(-2,8.6)+(.3, 0)$) -- (-.3, 8.6);
                
                \foreach \i in {1}{ 
                    \node[tiny vertex] (d\i) at (\dis +2, 11-\i) {};
                    \node at ($(d\i)+(-.7, 0)$)  {$v^-_{\i}$};
                }
                \foreach \i in {1.7}{ 
                    \node[tiny vertex] (d\i) at (\dis +2, 11-\i) {};
                    \node at ($(\dis+1.3, 9.3)$)  {$v^-_2$};
                }
                \node at (\dis+2, 8.8) {$\vdots$};
                \foreach \i in {3}{ 
                    \node[tiny vertex] (d\i) at (\dis +2, 11-\i) {};
                    \node at ($(d\i)+(-.75, 0)$)  {$v^-_{n^+}$};
                }
                \draw[box] ($(\dis+2,10)+(-.3, .3)$) rectangle ($(\dis+2, 8)+(.3, -.3)$);
                \draw[curly,leadercolor] ($(\dis+2,8.6)+(-.3, 0)$) -- (\dis+.3, 8.6);
                
                \node at (\dis+2, 5.7) {$\vdots$};
                \foreach \i in {4}{ 
                    \node[tiny vertex] (d\i) at (\dis +2, 11-\i) {};
                    \node at ($(d\i)-(.65, 0)$)  {$p_1$};
                }
                \foreach \i in {4.7}{ 
                    \node[tiny vertex] (d\i) at (\dis +2, 11-\i) {};
                    \node at (\dis +1.3, 11-\i)  {$p_2$};
                }
                \foreach \i in {6}{ 
                    \node[tiny vertex] (d\i) at (\dis +2, 11-\i) {};
                    \node at ($(d\i)-(.65, 0)$)  {$p_k$};
                }
                \draw[box] (\dis+1.7, 7.3) rectangle (\dis+2.3,4.7);
                \draw[curly,followercolor] (\dis+1.7, 5.7) -- (\dis+.3, 5.7);
                
                \node[tiny vertex] (z) at (\dis+3.5, 7.5) {};
                \node at ($(z)-(0, .6)$)  {$z$};
                \draw[box] ($(z)+(-.3, .3)$) rectangle ($(z)+(.3, -.3)$);
                \draw[curly,followercolor] (\dis+2.3, 6) -- (\dis+3.2, 7.3);
                \node[tiny vertex] (z') at (\dis +4.5, 7.5) {};
                \node at ($(z')-(0, .5)$)  {$z'$};
                \draw[followercolor,line width=2.5pt] (z) -- (z');
                \draw[curly,followercolor] (\dis+2.3, 9) -- (\dis+3.2, 7.7);

                \node at (0, 10.9) {$V_H^-$};
                \node[tiny vertex] (a1) at (0, 10) {};
                \node[tiny vertex] (a2) at (0, 9) {};
                \node[tiny vertex] (a3) at (0, 8) {};
                \node[tiny vertex] (a4) at (0, 7) {};
                \node at (0, 6) {$\vdots$};
                \node[tiny vertex] (an) at (0, 5) {};
                \draw[box] ($(a1)+(-.3, .3)$) rectangle ($(an)+(.3, -.3)$);	
                \node at (\dis, 10.9) {$V_H^+$};
                \node[tiny vertex] (a1') at (\dis, 10) {};
                \node[tiny vertex] (a2') at (\dis, 9) {};
                \node[tiny vertex] (a3') at (\dis, 8) {};
                \node[tiny vertex] (a4') at (\dis, 7) {};
                \node at (\dis, 6) {$\vdots$};
                \node[tiny vertex] (an') at (\dis, 5) {};
                \draw[box] ($(a1')+(-.3, .3)$) rectangle ($(an')+(.3, -.3)$);
                \draw[followercolor] (a1) -- (a1');
                \draw[followercolor] (a1) -- (an');
                \draw[followercolor] (a1) -- (a3');
                \draw[followercolor] (a2) -- (a2');
                \draw[followercolor] (a2) -- (a1');
                \draw[followercolor] (a3) -- (an');
                \draw[followercolor] (a3) -- (a1');
                \draw[followercolor] (a4) -- (a4');
                \draw[followercolor] (a4) -- (a1');
                \draw[followercolor] (a4) -- (a3');
                \draw[followercolor] (an) -- (a2');
                \draw[followercolor] (an) -- (a4');
                \draw[followercolor] (an) -- (an');
            \end{scope}
        \end{tikzpicture}
		\end{center}
		\caption{Polynomial transformation from \bcbs to \rp{\maxBM} for the case where $|I_1|=1$. 
		A straight line denotes a single edge, while a wavy line represents a complete bipartite graph between the corresponding vertex sets.
        The edge sets $I_\ell$ and $I_f$ are depicted in blue and red, respectively; the bold edge is mandatory.
        }
		\label{fig:np-hard:I_0=1:indif:arbitrary}
	\end{figure}
   
    Assume $\instance$ is a yes-instance of \bcbs, then there exists a $(k\times k)$-complete bipartite subgraph of~$H$.
    Let $\LL \subseteq I_\ell$ be a matching that covers all remaining vertices in~$H$ that are not part of the complete bipartite subgraph.
    Then at least one endpoint of each edge between~$V_H^-$ and~$V_H^+$ in~$G$ is covered by~$\LL$.
    The remaining graph~$G \setminus V(\LL)$ contains a complete bipartite graph consisting of exactly~$k$ vertices from~$V_H^+$ and~$z$ on one side and~$p_1, \dots, p_k$ on the other side, as well as the edge $(z, z')$ and $k$ isolated vertices from~$V_H^-$.
    All its edges except the mandatory edge~$(z, z')$ have the same weight for the follower.
    The optimal follower's choice is thus to choose~$k$ edges between $V_H^+$ and $\{p_1, \dots, p_k\}$ and the mandatory edge.
    Any matching including an edge $(p_j, z)$ has strictly smaller weight as it can only include at most $k$ edges.
    Thus, the follower is guaranteed to choose~$(z,z')$.

    For the converse, assume that there exists a leader's solution~$\LL$ such that the follower's response~$\FF$ contains the mandatory edge~$(z,z')$.
    Then, all vertices~$v^-_1,\dots, v^-_{n^+}$ and~$p_1,\dots, p_k$ must be covered by some edge in~$\LL\cup\FF$ since otherwise the follower would have preferred any of his incident edges over the edge~$(z,z')$.
    Thus,~$n^+$ of the vertices in~$V_H^+$ must be covered by some edges in~$\LL$ and the remaining $k$ vertices in~$V_H^+$ must be covered by edges incident to vertices~$p_1,\dots, p_k$.
    The latter is only possible if the remaining~$k$ vertices in~$V_H^+$ are isolated from the non-covered vertices in~$V_H^-$, since otherwise the follower would always have preferred to select an available edge~$(u,v)$, where $u\in V_H^-, v\in V_H^+$, over the edge~$(v,p_i)$ for some $i\in\numset{k}$.
    Additionally, since at most~$n^-$ vertices in~$V_H^-$ can be covered by the leader, this implies that the subgraph of~$V_H$ in~$G$ that is not covered by~$\LL$ forms an independent set with at least~$k$ vertices on each side. 
    So~$\instance$ is a yes-instance.
\end{proof}

Similar to the above result, we can obtain \np-hardness for the case when~$|I_0|=1$, $|I_1|=0$, and~$|I_*|$ is arbitrary. 
For 
this, we can add a vertex $z''$ as a pendant to $z'$ in the above construction, and let $d(z',z'')=1$.
Finally, redefine $I_0\coloneqq \{(z',z'')\}$, $I_*\coloneqq I_f \setminus \{(z',z'')\}$, and $I_1\coloneqq \emptyset$.

\begin{corollary}\label[corollary]{cor:np-hard:I_*arbitrary:I_0=1}
    \rp{\maxBM} is \np-hard even when $|I_0|=1$ and~$|I_1|=0$.
\end{corollary}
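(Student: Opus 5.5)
The plan is to prove the corollary by the pendant-vertex modification described just before its statement, and to transfer correctness from \Cref{thm:np-hard:I_*arbitrary:I_1=1} rather than redo the whole argument. Starting from a \bcbs instance~$\instance=(H,k)$, take the graph~$G$ built in the proof of \Cref{thm:np-hard:I_*arbitrary:I_1=1}. Add a new vertex~$z''$ to~$V^+$, which keeps~$G$ bipartite since~$z'\in V^-$. Add the follower's edge~$(z',z'')$ with~$d(z',z'')=1$, and set~$I_0\coloneqq\{(z',z'')\}$, $I_1\coloneqq\emptyset$, and let~$I_*$ consist of all other follower's edges, including~$(z,z')$. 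Call the new graph~$G'$. The construction is clearly polynomial. The key structural fact is that every optimal follower's reaction covers~$z'$: otherwise~$z'$ and~$z''$ are both free, and adding~$(z',z'')$ of positive weight would improve the reaction. Hence an optimal reaction avoids~$(z',z'')$ if and only if it contains~$(z,z')$.

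For the direction from a yes-instance of \bcbs, the leader takes the same~$\LL$ as in the proof of \Cref{thm:np-hard:I_*arbitrary:I_1=1}. The uncovered part of~$G'$ then consists of three pieces:
\begin{itemize}
\item a complete bipartite graph between $k$ vertices of~$V_H^+$ together with~$z$ on one side and~$\{p_1,\dots,p_k\}$ on the other, all edges of weight~$2$;
\item the path $z$--$z'$--$z''$ with both edges of weight~$1$;
\item isolated vertices.
\end{itemize}
The complete bipartite part contributes at most~$k$ edges, hence at most~$2k$. The path contributes at most one edge, hence at most~$1$. So the optimum is~$2k+1$, and it is attained by matching the~$p_j$ with the~$k$ vertices of~$V_H^+$ and adding~$(z,z')$. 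This reaction is therefore optimal and avoids the forbidden edge. There may be ties with reactions that use~$(z',z'')$, but in the optimistic setting that is harmless, because existence of one good optimal reaction suffices.

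For the converse, suppose some leader's action~$\LL$ admits an optimal follower's reaction~$\FF$ in~$G'$ with~$(z',z'')\notin\FF$. By the structural fact,~$(z,z')\in\FF$. I then compare optima with and without~$z''$. Every follower's matching in~$G\setminus V(\LL)$ is also one in~$G'\setminus V(\LL)$, so the old optimum is at most the new one. Conversely,~$\FF$ is itself a matching of the old graph, so the new optimum~$d(\FF)$ is at most the old one. Thus~$\FF$ is an optimal reaction to~$\LL$ in the instance of \Cref{thm:np-hard:I_*arbitrary:I_1=1}, and it contains the mandatory edge~$(z,z')$. The converse part of that proof then yields a yes-instance of \bcbs.

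The main obstacle I expect is the tie issue. Adding~$(z',z'')$ creates optimal reactions that were unavailable before, such as using~$(p_j,z)$ together with~$(z',z'')$. One must make sure that optimism absorbs these ties in the forward direction. In the backward direction, the optimum-value comparison above avoids any case analysis, because the chosen reaction never uses~$z''$.
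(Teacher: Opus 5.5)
Your proposal is correct and uses exactly the paper's construction: a pendant vertex $z''$ at $z'$ with $d(z',z'')=1$, $I_0=\{(z',z'')\}$, $I_1=\emptyset$, and all other follower's edges neutral. The paper leaves the verification implicit. You supply it soundly: every optimal reaction must cover $z'$, and comparing optimum values transfers optimality back to the instance of \Cref{thm:np-hard:I_*arbitrary:I_1=1}. You also correctly note that the new ties, such as reactions using $(p_j,z)$ and $(z',z'')$, are harmless because the setting is optimistic.
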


The equivalence of the single-level problems \maxM\ and \perfectM\ can be shown by transformations of instances of the one into the other problem; see \eg \cite[Proposition~11.1]{KorteVygen_CombOpt}.
Similarly, a transformation is also possible from~\rp{\maxM} to~\rp{\perfectM}.
Given an instance of~\rp{\maxM} with a graph~$G=(V,E)$, we add a copy for each vertex in~$V$ and connect it to all other vertices of the graph by zero-weight edges.
All added edges become neutral follower's edges, \ie are added to~$I_*$.
Now for any leader's solution~$\LL\subseteq I_\ell$, it is optimal for the follower to choose the same response~$\FF$ as in the initial instance of~\rp{\maxM} and extend it to a perfect matching by a suitable selection of the new auxiliary edges.

For a transformation from~\rp{\maxBM} to~\rp{\perfectBM}, instead of connecting the added copies to all vertices of the graph, we add the copy to the opposite partite set of the original vertex and connect it to all vertices of the partite set of the original vertex, thereby preserving the bipartition.
It will still be possible for the follower to extend~$\LL\cup\FF$ by auxiliary edges to a perfect matching since with~$\LL\cup\FF$ more copied vertices than uncovered original vertices remain.
Note that these transformations only extend the set of neutral edges, exactly preserving the sets of mandatory and forbidden edges.
Due to the total number of neutral edges being already arbitrary in~\Cref{thm:np-hard:I_*arbitrary:I_1=1} and~\Cref{cor:np-hard:I_*arbitrary:I_0=1}, the \np-hardness directly extends to the perfect matching case.

\begin{corollary}\label[corollary]{cor:perfect-np-hard:I_*arbitrary:I_1orI_0=1}
    \rp{\perfectBM} is \np-hard even when~$|I_0\cup I_1|=1$.
\end{corollary}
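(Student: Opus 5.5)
The plan is to reduce from \Cref{thm:np-hard:I_*arbitrary:I_1=1} and \Cref{cor:np-hard:I_*arbitrary:I_0=1} using the transformation from \rp{\maxBM} to \rp{\perfectBM} sketched just before the statement. That transformation only enlarges $I_*$ and leaves $I_0$ and $I_1$ unchanged, so the hard instances with $|I_1|=1,|I_0|=0$ and with $|I_0|=1,|I_1|=0$ map to instances with $|I_0\cup I_1|=1$.

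\emph{Construction.} Take the bipartite graph $G=(V^-\,\dot\cup\,V^+,E)$ from those reductions. For each $v\in V^-$ add a copy $\bar v$ to the $+$ side, and for each $v\in V^+$ add a copy $\bar v$ to the $-$ side. Join every copy to all original vertices on the side opposite to it; that is, the copy of a $V^-$-vertex is joined to all of $V^-$, and symmetrically. All new edges are neutral follower's edges of weight $0$ for the follower. The resulting graph is bipartite. Since the perfect matching problem is a minimisation, I negate $d$ on the original follower's edges, so that maximising $d$ in the old instance becomes minimising in the new one.

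\emph{Equivalence.} Fix any leader's matching $\LL\subseteq I_\ell$. On the $-$ side, let $a$ be the number of original vertices left uncovered by $\LL\cup\FF$. The copies on that side number $|V^+|$, and the uncovered $+$ originals number $|V^+|-|M|$, where $M=\LL\cup\FF$ is the original matching. So after removing $M$, the bipartite graph between the remaining originals and the copies is complete, and the counts balance: $|V^-|-|M|$ uncovered originals on the $-$ side meet $|V^-|-|M|+|M|$ copies on the $+$ side. Hmm, this is unbalanced; in fact copies must also match among themselves.

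To fix this, I will additionally add a complete bipartite graph of zero-weight neutral edges between the two sets of copies. Then any matching $M$ of $G$ extends to a perfect matching: uncovered originals are matched to copies, and the leftover copies on both sides are equally numerous (the totals on each side are equal, $|V^-|+|V^+|$) and are matched among themselves. Since the extension costs $0$, the follower's optimal perfect-matching responses to $\LL$ are exactly the optimal matchings of the original instance, extended by zero edges. Hence the yes/no answers coincide. One subtlety is that the leader's choice $\LL$ ranges over the same set in both instances, because the leader owns no new edges.

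\emph{Main obstacle.} The delicate step is checking that the added structure neither creates new optimal follower's responses that alter which original edges are used, nor affects the mandatory or forbidden edge. Zero weights together with the "every matching extends" property settle this. Sign conventions also need care: the original weights may be positive, so after negation they are minimised correctly, and a zero-weight extension never beats using an original edge.
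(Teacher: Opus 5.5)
Your proposal is correct and is essentially the paper's argument. You apply the transformation that only adds zero-weight neutral edges, so $I_0$ and $I_1$ are untouched and $|I_0\cup I_1|=1$ is preserved, to the hard instances with one mandatory or one forbidden edge; the complete bipartite graph you add between the two sets of copies is exactly what the paper's phrase ``connect it to all vertices of the partite set of the original vertex'' has to mean in the enlarged graph, since without it the $|V^-|$ copies of $V^-$ would have to be matched onto all of $V^-$ and no original edge could ever appear in a perfect matching.
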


\subsection{Parameterized Results}
\label{subsec:parameterzied}

Next, we investigate the parameterized complexity of \rp{\maxM} and \rp{\perfectM} \wrt the cardinalities of the sets~$I_\ell$, $I_0$, $I_1$, and $I_*$. 
The results of~\Cref{subsec:partial} imply that \np-hardness persists even when~$|I_0|$ and~$|I_1|$ are fixed. 
In contrast, both problems are fixed-parameter tractable~(\fpt) \wrt both~$|I_*|$ and~$|I_\ell|$; see \Cref{pro:polytime:I_f=I_*} and \Cref{cor:ftp-rp}.

The fixed-parameter tractability in terms of the different types of follower's edges turns out to be more interesting.
Complete \rp{\maxM} is \fpt \wrt the total number of forbidden edges, even when the total number of mandatory edges is unbounded.
This confirms the previous observation that mandatory edges tend to be easier to deal with than forbidden edges, compare~\Cref{prop:polytime:I_f=I_1} and~\Cref{thm:problembm-np-hard}.

\begin{theorem}\label{lem:fpt:completeMaxM:I_0}
    If $I_f=I_0\cup I_1$, then \rp{\maxM} is \fpt \wrt $|I_0|$.
\end{theorem}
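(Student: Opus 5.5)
The plan is to enumerate which forbidden edges stay available to the follower, and for each choice solve two polynomial subproblems. Since $I_f=I_0\cup I_1$, the target set is $\target=\{I_1\}$, so the follower must respond with exactly $I_1$. First I check that $I_1$ is a matching; if not, the instance is a no-instance. A leader's action $\LL\subseteq I_\ell$ is then admissible only if $\LL$ is a matching of the graph $G_\ell\coloneqq (V,I_\ell)\setminus V(I_1)$. Otherwise $\LL\cup I_1$ is not a matching, and $I_1$ cannot be a feasible reaction.

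Given such an $\LL$, the follower chooses among $I_1$ and those forbidden edges with no endpoint in $V(\LL)$. Let $F(\LL)\subseteq I_0$ denote this set of surviving forbidden edges. The instance is a yes-instance if and only if some admissible $\LL$ makes $I_1$ a maximum-weight matching (\wrt $d$) of the graph $(V,I_1\cup F(\LL))$. Two observations make this tractable.

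\emph{Finite type.} Whether $I_1$ is optimal depends on $\LL$ only through $F(\LL)$, which ranges over at most $2^{|I_0|}$ subsets.

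\emph{Monotonicity.} If $I_1$ is a maximum-weight matching in $(V,I_1\cup F)$, it remains one in $(V,I_1\cup F')$ for every $F'\subseteq F$. So it suffices for the leader to block \emph{at least} the edges in $B\coloneqq I_0\setminus F$ for some \emph{good} set $F$, i.e.\ one for which $I_1$ is optimal in $(V,I_1\cup F)$.

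The algorithm enumerates all $F\subseteq I_0$. For each $F$, it solves a single-level \maxM\ instance on $I_1\cup F$ and checks whether its optimal value equals $d(I_1)$, which decides in polynomial time whether $F$ is good. For each good $F$, it then decides whether some matching $\LL$ of $G_\ell$ covers at least one endpoint of every edge in $B=I_0\setminus F$.

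For the covering question, I guess for every $e\in B$ which endpoint $\LL$ must cover, at most $2^{|B|}$ guesses. A guess fails immediately if the chosen endpoint lies in $V(I_1)$ or is isolated in $G_\ell$. Otherwise the guess yields a vertex set $S$ with $|S|\le|I_0|$. It remains to decide whether $G_\ell$ has a matching covering all of $S$. This is polynomial: give each edge $uv$ of $G_\ell$ the weight $|\{u,v\}\cap S|$ and compute a maximum-weight matching. Every matching has weight at most $|S|$, and weight $|S|$ is attained exactly when all of $S$ is covered, since each covered vertex of $S$ contributes exactly once.

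Correctness has two directions. For soundness, any such $\LL$ satisfies $F(\LL)\subseteq F$, so $I_1$ is optimal for the follower by monotonicity. For completeness, any witnessing $\LL$ is found with $F=F(\LL)$ and with the guess given by endpoints that $\LL$ actually covers. The total running time is $\mathcal{O}(4^{|I_0|})$ times a polynomial, which gives the claimed fixed-parameter tractability \wrt $|I_0|$.

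I expect the main obstacle to be the completeness direction together with the subtle forbidden edges that touch $V(I_1)$. Such an edge can only be blocked through its other endpoint, because $\LL$ may not touch $V(I_1)$. Yet it still competes in the follower's problem, since the follower may swap it against mandatory edges. I would handle it entirely inside the goodness test, which is run on the full graph $I_1\cup F$ including these edges, and exclude the infeasible endpoint choices in the guessing step, as described above.
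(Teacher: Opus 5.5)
Your proof is correct, but it is organized differently from the paper's.

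\emph{The paper's route.} It enumerates the \emph{exact} set $S\subseteq V(I_0)\setminus V(I_1)$ of forbidden-edge endpoints that the leader covers. It enforces exactness by deleting $V(I_f)\setminus S$ before running the same covering test with edge weights $|\{u,v\}\cap S|$ that you use. It then computes the follower's optimum on his edges in $G\setminus S$. Because the covered set is pinned down, the follower's subproblem is fully determined, so no monotonicity argument is needed. There is a single enumeration layer of at most $2^{2|I_0|}$ sets.

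\emph{Your route.} You enumerate the surviving forbidden edges $F$ and test the follower side once per $F$, independently of the leader. Monotonicity then relaxes the leader's requirement to covering \emph{at least} one endpoint of each edge in $I_0\setminus F$. This spares you the vertex deletions but costs an extra endpoint-guessing layer. The running-time bound is the same, $4^{|I_0|}\cdot\mathrm{poly}$.

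The two layers can be merged. Enumerate $S\subseteq V(I_0)\setminus V(I_1)$ directly and let $F$ be the forbidden edges avoiding $S$. Your soundness and completeness arguments go through unchanged, with $S=V(\LL)\cap V(I_0)$ in the completeness direction, since then $F=F(\LL)$.

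\emph{Comparison.} Your formulation is more portable. It only uses that deleting follower options cannot destroy optimality of a fixed feasible response, which holds for any underlying problem. The paper's exact version is slightly more economical. Your explicit restriction to $G_\ell$ also makes precise that the covering test should use leader's edges only, which the paper leaves implicit.
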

\begin{proof}
    To obtain the desired response, the leader must cover an appropriate subset $S\subseteq V(I_0) \setminus V(I_1)$ of vertices incident to forbidden edges such that the follower's response consists of exactly~$I_1$.
    If~$|I_0|=k$, we can enumerate over all such subsets $S$, which are at most $2^{2k}$ many.
    For a subset~$S$, it suffices to verify whether the leader can choose a matching~$\LL \subseteq I_\ell$ that covers exactly~$S$ within~$V(I_f)$ and whether the follower's reaction renders the set~$I_1$.
    We claim that the former holds if and only if the maximum-weight matching in the graph $G\setminus(V(I_f)\setminus S)$ \wrt 
    \[
    c(e)\coloneqq\begin{cases}\begin{array}{ll}
        1 & \text{if } e=(u,v) \text{ with } u\in S,\ v\notin S,\\
        2 & \text{if } e=(u,v) \text{ with } u, v\in S,\\
        0 & \text{otherwise},
        \end{array}
    \end{cases}
    \]
    has weight $|S|$.
    The weight of any matching can be at most $|S|$, since each vertex in~$S$ is adjacent to at most one edge of the matching. 
    Moreover, if the optimal weight is strictly less than~$|S|$, some vertex in $S$ is not covered by the matching. 
    Additionally, to check the follower's response, it suffices to compute an optimal matching on the follower's edges in 
    $G\setminus S$ \wrt~$d$.
\end{proof}

When $|I_0|+|I_*|\leq k$, there are $2^{|I_*|} \leq 2^k$ many possibilities 
to turn neutral edges into mandatory of forbidden edges.
Each resulting 
complete response problem has at most $k$ forbidden edges, so applying~\Cref{lem:fpt:completeMaxM:I_0} yields the following corollary.

\begin{corollary}\label[corollary]{thm:fpt:I_0:I_*}
    \rp{\maxM} is \fpt with respect to $|I_0|+|I_*|$.
\end{corollary}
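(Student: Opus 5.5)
We reduce to the complete case by guessing how the neutral edges are handled, and then apply \Cref{lem:fpt:completeMaxM:I_0}. Let $k\coloneqq|I_0|+|I_*|$. For every subset $T\subseteq I_*$ we form the complete instance $\instance_T$. It keeps the graph, the leader's edges and the follower's weights $d$ unchanged. Its mandatory set is $I_1^T\coloneqq I_1\cup T$ and its forbidden set is $I_0^T\coloneqq I_0\cup(I_*\setminus T)$, so that $I_f=I_0^T\cup I_1^T$. There are $2^{|I_*|}\le 2^k$ such instances, and each has $|I_0^T|\le|I_0|+|I_*|=k$ forbidden edges.

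The key step is the equivalence: the original instance is a yes-instance if and only if some $\instance_T$ is a yes-instance. Suppose $\instance_T$ is a yes-instance, witnessed by a leader's action $\LL$ with an optimal follower's reaction $\FF$, where $\FF\cap I_0^T=\emptyset$ and $I_1^T\subseteq\FF$. Then $\FF\cap I_0=\emptyset$ and $I_1\subseteq\FF$, so $\LL$ witnesses the original instance. Conversely, suppose $\LL$ and an optimal reaction $\FF$ witness the original instance. Set $T\coloneqq\FF\cap I_*$. Since $\FF\subseteq I_f$ avoids $I_0$ and contains $I_1$, we get $\FF=I_1\cup T$. Hence $\FF$ contains all of $I_1^T$ and avoids $I_0\cup(I_*\setminus T)=I_0^T$. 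So the same pair $(\LL,\FF)$ witnesses $\instance_T$.

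This works because the follower's problem (the $\argmax$ over his reactions) depends only on $G$, the partition $I_\ell\,\dot\cup\,I_f$ and $d$. Relabelling follower's edges as mandatory, forbidden or neutral changes only the target set, not which reactions are optimal.

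By \Cref{lem:fpt:completeMaxM:I_0}, each $\instance_T$ can be decided in time $f(|I_0^T|)\cdot\mathrm{poly}(n)\le f(k)\cdot\mathrm{poly}(n)$ for a nondecreasing function $f$; taking the bound as nondecreasing in the parameter is harmless, since the proof gives roughly $2^{2k}$ enumeration. The overall running time is therefore $2^k f(k)\,\mathrm{poly}(n)$, which is \fpt with respect to $|I_0|+|I_*|$.

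The only step needing care is the converse direction of the equivalence, and it goes through because a single optimal reaction $\FF$ determines $T$ completely. No step appears to be a real obstacle.
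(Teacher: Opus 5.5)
Your proposal is correct and matches the paper's argument: enumerate the $2^{|I_*|}$ ways of relabelling neutral edges as mandatory or forbidden, and decide each resulting complete instance, which has at most $|I_0|+|I_*|$ forbidden edges, via \Cref{lem:fpt:completeMaxM:I_0}. Your explicit check of the equivalence, choosing $T\coloneqq\FF\cap I_*$ from a witnessing optimal reaction, fills in a step the paper leaves implicit.
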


With a similar argument, for \rp{\perfectM} with $|I_*| = k$, there are $2^k$ corresponding complete response problems that can be solved in polynomial time by~\Cref{thm:perfect-no-indifferent-follower}.

\begin{corollary}\label[corollary]{pro:fpt:perfectMconstIndifferent}
    \rp{\perfectM} is \fpt with respect to $|I_*|$.
\end{corollary}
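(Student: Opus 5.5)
The plan is to enumerate how the neutral edges are classified. Given an instance of \rp{\perfectM} with $|I_*|=k$, we consider every partition $I_*=A_0\ \dot\cup\ A_1$ and form the complete response instance in which $I_0'\coloneqq I_0\cup A_0$ and $I_1'\coloneqq I_1\cup A_1$. All other data stay unchanged: the graph, $I_\ell$, and $d$. This gives exactly $2^k$ complete instances. By \Cref{thm:perfect-no-indifferent-follower}, each of them can be decided in polynomial time. We answer yes if and only if at least one of them is a yes-instance. The total running time is therefore $2^k\cdot\mathrm{poly}(|G|)$, which is \fpt.

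The key step is to show that this enumeration is correct.

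First suppose the original instance is a yes-instance. Then there is a leader's matching $\LL$ and an optimal follower's reaction $\FF$ with $I_1\subseteq\FF$ and $\FF\cap I_0=\emptyset$. Choose $A_1\coloneqq\FF\cap I_*$ and $A_0\coloneqq I_*\setminus\FF$. Then $\FF=I_1\cup A_1$ is exactly the target of the complete instance. The same $\LL$ certifies that this complete instance is a yes-instance, because the follower's optimization problem for a fixed $\LL$ does not depend on how the edges are labelled.

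Conversely, suppose some complete instance is a yes-instance via $\LL$ and an optimal reaction $\FF=I_1\cup A_1$. Then $\FF$ contains $I_1$ and avoids $I_0\subseteq I_0'$, so the same $\LL$ witnesses the original instance.

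This equivalence is the only point that needs care, and it is routine. It rests on the fact that optimality of the follower's reaction is independent of the partition of $I_f$: that partition only determines the target set, which is the union of the singleton targets of the complete instances. I do not expect any real obstacle beyond making this observation explicit.
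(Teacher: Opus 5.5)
Your proposal is correct and follows essentially the paper's own argument: you enumerate the $2^{|I_*|}$ ways to declare each neutral edge mandatory or forbidden, and you decide each resulting complete instance in polynomial time via \Cref{thm:perfect-no-indifferent-follower}. You also verify explicitly that the original target set is the union of the singleton targets and that the follower's optimization does not depend on the labelling; this fills in the step the paper compresses into ``with a similar argument''.
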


\Cref{thm:fpt:I_0:I_*} and~\Cref{pro:fpt:perfectMconstIndifferent} imply that \rp{\maxM} and \rp{\perfectM} are both \fpt in the total number of follower's edges.
Interestingly, in~\cite[Theorem~20]{BHH22_ComplexityBilevelSpanningTree} the same result has been shown for the spanning tree response problem.

\section{Conclusion}
\label{Sec:Conclusions}

In this study, we settle the complexity of the response problem for both maximum-weight and perfect matching problems for all the main cases \wrt restrictions on the total number of leader's, mandatory, forbidden, and neutral edges; see \cref{tab:overview} for an overview of our results.

\begin{table}[htb]
    \centering
    \scalebox{1}{
    \begin{threeparttable}
        \begin{tabular}{cccc l@{\hspace{1cm}} rl@{\hspace{1cm}} rl} \toprule
            & \multicolumn{3}{c}{{\mathversion{bold}$\abs{I_f}$}}  && \multicolumn{2}{l}{\multirow[b]{2}{*}{\makecell[l]{\bf \rp{\maxM} \\ \bf \rp{\maxBM}}}}    & \multicolumn{2}{l}{\multirow[b]{2}{*}{\makecell[l]{\bf \rp{\perfectM} \\ \bf \rp{\perfectBM}}}}  \\ \cmidrule[0.5pt]{2-4}
            {\mathversion{bold}$\abs{I_\ell}$}  & {\mathversion{bold}$\abs{I_*}$}   & {\mathversion{bold}$\abs{I_0}$}   & {\mathversion{bold}$\abs{I_1}$}   &   &  & &  \\ 
            \midrule 
            \multicolumn{8}{l}{Bounded leader set} \\[1ex]
            $k$             & {}            & {}            & {}            && \fpt &Cor.~\ref{cor:ftp-rp}    & \fpt &Cor.~\ref{cor:ftp-rp}\\ 
            \midrule 
            \multicolumn{8}{l}{Complete response} \\[1ex]
            {}              & 0             & 0             & {}            &&  \p & Prop.~\ref{prop:polytime:I_f=I_1}         & \p  & \multirow{4}{*}{$\left.\vphantom{\begin{matrix}a\\a\\a\\a\end{matrix}}\right]$~Thm.~\ref{thm:perfect-no-indifferent-follower}} \\
            {}              & 0             & {}            & 0             && \np-hard &\multirow{2}{*}{$\left.\vphantom{\begin{matrix}a\\a\end{matrix}}\right]$~Thm.~\ref{thm:problembm-np-hard}}           & \p & \\
            {}              & 0             & {}            & {}            && \np-hard &      & \p & \\
            {}              & 0             & $k$           & {}            && \fpt &Thm.~\ref{lem:fpt:completeMaxM:I_0}            & \p & \\
            \midrule 
            \multicolumn{8}{l}{\makecell[l]{Partial response with bounded~$|I_*|$}} \\[1ex]
            {}              & $k$           & {}           & {}            && \np-hard & Thm.~\ref{thm:problembm-np-hard}                    &  \fpt & \multirow{2}{*}{$\left.\vphantom{\begin{matrix}a\\a\end{matrix}}\right]$~Cor.~\ref{pro:fpt:perfectMconstIndifferent}} \\
            {}              & \multicolumn{2}{c}{$\leftarrow k\rightarrow$}            & {}            && \fpt &Cor.~\ref{thm:fpt:I_0:I_*}      & \fpt &  \\ \midrule 
            \multicolumn{8}{l}{Partial response} \\[1ex]
            {}              & {}            & 0             & 0             && \p &Prop.~\ref{pro:polytime:I_f=I_*}                  & \p &Prop.~\ref{pro:polytime:I_f=I_*}\\
            {}              & {}            & 0             & 1             && \np-hard &Thm.~\ref{thm:np-hard:I_*arbitrary:I_1=1}  & \np-hard & \multirow{2}{*}{$\left.\vphantom{\begin{matrix}a\\a\end{matrix}}\right]$~Cor.~\ref{cor:perfect-np-hard:I_*arbitrary:I_1orI_0=1}} \\
            {}              & {}            & 1             & 0             && \np-hard &Cor.~\ref{cor:np-hard:I_*arbitrary:I_0=1}  & \np-hard & \\ \bottomrule
        \end{tabular}
        \caption{Overview of our results on the matching response problem depending on the sizes of sets $I_\ell, I_*, I_0$, and $I_1$ with $k$ being a constant. 
        Empty cells imply that the results hold for arbitrary values.
        \fpt means fixed-parameter tractable in~$k$.}
    \label{tab:overview}
    \end{threeparttable}}
\end{table}

Though maximum-weight and perfect matching problems are equivalent as single-level problems, their response problems differ in complexity in several cases.
Interestingly, however, we did not encounter any situation in which restricting to bipartite graphs made the response problem easier to solve in either case.

Our tractability results for matching response problems directly extend to the corresponding optimization problems, where the leader optimizes her objective instead of finding a feasible solution, while keeping the notion of mandatory, forbidden, and neutral edges.
This is not true for all underlying problems, as can be seen from the case 
$I_f=I_1$ in \cref{ex:knapsack}.
Moreover, in a response problem, the leader does not necessarily have to solve the follower's subproblem in order to decide whether a response from the target set can be achieved, suggesting that in certain cases the problem might be easier than the corresponding partitioned-items problem or even the follower's problem.

For future work, one might investigate whether the general response problem \rp{\problem}, \ie without restrictions on $I_\ell$, $I_0$, $I_1$, and $I_*$, for an underlying problem \problem can be complete for a lower level of the polynomial hierarchy than its corresponding partitioned-items bilevel variant~\bil{\problem}.
Moreover, considering the rich complexity landscape we encountered for matching response problems, we suggest to study response problems for further optimization problems, \eg shortest path, knapsack, or independent set.

\clearpage
\appendix
\section{Appendix}

\begin{proposition} \label[proposition]{lem:trans-bil-opt-pes}
    Given a combinatorial optimization problem~$\problem$, any optimistic (resp.\ pessimistic) instance of \bil{\problem} can be polynomially transformed into an equivalent pessimistic (resp.\ optimistic) instance of~$\bil{\problem}$ by only transforming the follower's objective function~$d$.
    For integer-valued~$d\colon I_f\to\Z$, feasible transformations for the follower's objective function are:
    \begin{itemize}
        \item[--] optimistic to pessimistic:\quad $\tilde d \coloneqq d + \epsilon\cdot \left.c\right|_{I_f}$\\[-2.75ex]
        \item[--] pessimistic to optimistic:\quad $\tilde d \coloneqq d - \epsilon\cdot \left.c\right|_{I_f}$
    \end{itemize}
    with~$\epsilon=0$ if~$\left.c\right|_{I_f}\equiv 0$ and~$\varepsilon < \big(\sum_{i\in I_f}|c(i)|\big)^{-1}$ otherwise.
\end{proposition}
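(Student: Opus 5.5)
The plan is a perturbation argument. Write $C\coloneqq\sum_{i\in I_f}|c(i)|$. If $c|_{I_f}\equiv 0$, every follower reaction gives the leader the same contribution $c(Y)=0$ on $I_f$. Hence the leader's value $c(X\cup Y)=c(X)$ does not depend on which optimal follower reaction is chosen. The optimistic and pessimistic problems then coincide with $\tilde d=d$, i.e.\ $\epsilon=0$. So assume $C>0$ and $0<\epsilon<C^{-1}$, and fix an arbitrary leader's action $X\subseteq I_\ell$. Let $R(X)\coloneqq\{Y\subseteq I_f\mid X\cup Y\in\feas\}$ be the set of feasible reactions to $X$. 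It is the same for $d$ and $\tilde d$, since only the follower's objective changes.

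\textbf{Step 1 (no new optima).} I will show that, for the transformed objective $\tilde d=d+\epsilon c|_{I_f}$, every optimal reaction in $R(X)$ is also $d$-optimal. Since $d$ is integer-valued, any $Y,Y'\in R(X)$ with $d(Y)<d(Y')$ satisfy $d(Y')-d(Y)\ge 1$. Moreover, $|\epsilon c(Y)-\epsilon c(Y')|\le \epsilon\sum_{i\in I_f}|c(i)|=\epsilon C<1$, where I use that $c(Y)-c(Y')$ is a signed sum of distinct terms $c(i)$, $i\in I_f$. Therefore $\tilde d(Y)<\tilde d(Y')$: a $d$-suboptimal reaction stays $\tilde d$-suboptimal.

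\textbf{Step 2 (tie-breaking).} Among the $d$-optimal reactions in $R(X)$, all have the same $d$-value, so $\tilde d$ orders them exactly by $c(Y)$. Hence the $\tilde d$-optimal reactions are precisely the $d$-optimal reactions maximizing $c(Y)$. Because $c(X\cup Y)=c(X)+c(Y)$, these are exactly the reactions the optimistic leader would pick. A pessimistic leader facing $\tilde d$ takes the worst among them, but they all give the same value $c(X)+\max c(Y)$. Thus for every $X$, the pessimistic value under $\tilde d$ equals the optimistic value under $d$, and the two instances have the same set of feasible leader actions (those with $R(X)\neq\emptyset$). Maximizing over $X$ gives equivalence, including identical optimal leader actions.

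\textbf{Step 3 (reverse direction and polynomiality).} The pessimistic-to-optimistic direction is symmetric with $\tilde d=d-\epsilon c|_{I_f}$. Now the $\tilde d$-optimal reactions are the $d$-optimal ones minimizing $c(Y)$, which is what a pessimistic leader faces, and any optimistic selection among them yields the same value. Polynomiality follows by choosing, e.g., $\epsilon=1/(C+1)$ (rational, with polynomial encoding length). If one wants integer data, one can multiply through by $C+1$, i.e.\ use $(C+1)d\pm c|_{I_f}$.

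The main subtlety, rather than a real obstacle, is Step 1: bounding the perturbation gap by $\epsilon C<1$ against the integrality gap of $d$, which is exactly why the stated bound on $\epsilon$ is strict. One also has to note that when $c$ is rational but $d$ is integral the bound still works, since only $d$ needs integrality. If $d$ is rational, first scale it to an integer-valued function, which leaves the follower's argmax unchanged.
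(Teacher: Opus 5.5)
Your proof is correct and follows essentially the same route as the paper. Both arguments show that the integrality gap of $d$ dominates the $\epsilon$-perturbation, so $\tilde d$ only breaks ties among $d$-optimal reactions in favor of larger (resp.\ smaller) $c(Y)$. Hence, for every leader's action, the pessimistic value under $\tilde d$ equals the optimistic value under $d$ (and vice versa). Your Step~1 bound $\epsilon\,|c(Y)-c(Y')|\le\epsilon C<1$ is slightly more careful than the paper's chain $\tilde d(Y)<d(Y^*)\le\tilde d(Y^*)$, whose second inequality tacitly requires $c(Y^*)\ge 0$.
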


\begin{proof}
    If~$\left.c\right|_{I_f}\equiv 0$, then the leader is indifferent about the follower's response, so the optimistic and pessimistic setting are equivalent.
    
    In the following, let~$\feas(\LL)\coloneqq \{ \FF\subseteq I_f \mid \LL\cup \FF\in\feas\}$ denote the set of all feasible follower's solutions and~$\feas^*_d(\LL)\coloneqq\argmax \set{d(\FF)\mid \FF \in \feas(\LL)}$ the set of all optimal follower's responses to a given leader's solution~$\LL\subseteq I_\ell$ for an underlying follower's weight function~$d$.
    We prove the correctness of the transformation from the optimistic to the pessimistic setting.
    The reverse transformation can be validated similarly.
    
    Let an instance of~\bil{\problem} with follower's weight function~$d\colon I_f\to\Z$ in the optimistic setting be given.
    We show that the transformation preserves the leader's objective function value for every leader's solution~$\LL\subseteq I_\ell$.
    First, we prove that the transformed set of optimal follower's responses~$\feas^*_{\tilde d}(\LL)$ contains exactly the initially optimistic optimal follower's responses,~\ie the set~$\FF\in\feas^*_d(\LL)$ that maximize~$c$.
    By integrality of~$d$, we have~$d(\FF^*) \geq d(\FF) + 1$ for all optimal follower's responses~$\FF^*~\in~\feas^*_d(\LL)$ and all non-optimal follower's solutions~$\FF \in \feas(\LL) \setminus \feas^*_d(\LL)$.
    Furthermore, by the choice of~$\varepsilon$, we have~$\tilde d(\FF) < d(\FF^*) \leq \tilde d(\FF^*)$.
    So any initially non-optimal follower's solution~$\FF$ to~$\LL$ remains non-optimal after the transformation,~\ie~$\feas^*_{\tilde d}(\LL)~\subseteq~\feas^*_{d}(\LL)$.
    
    Now let~$\FF^*\in \feas^*_d(\LL)$ be an optimistic optimal follower's response to~$\LL$, \ie 
    \[\FF^*\in\arg\max\{c(\FF) \mid \FF\in\feas^*_d(\LL)\}\;.\]
    Thus, any initially optimal follower's response that was not an optimistic solution,~\ie a~$\FF'\in\feas^*_d(\LL)$ with~$c(\FF') < c(\FF^*)$, does not remain follower-optimal after the transformation since
    \[\tilde d(\FF') = d(\FF')+\epsilon\cdot c(\FF') < \tilde d(\FF^*)\;.\]
    Therefore,~$\feas^*_{\tilde d}(\LL) \subseteq \arg\max\{c(\FF) \mid \FF\in\feas^*_d(\LL)\}$.
    Moreover, since~$\tilde d(\FF^*) = d(\FF^*)+\epsilon\cdot c(\FF^*)$ is equal for all~$\FF^*\in\arg\max\{c(\FF) \mid \FF\in\feas^*_d(\LL)\}$, it holds~$\feas^*_{\tilde d}(\LL) = \arg\max\{c(\FF) \mid \FF\in\feas^*_d(\LL)\}$ which we wanted to prove.
    But this implies that the leader's objective function value for~$\LL$ in the transformed pessimistic instance is equal to the one in the initial optimistic instance, since
    \begin{align*}
        \min\{c(\FF) \mid \FF\in\feas^*_{\tilde d}(\LL)\}
        &= \min\big\{c(\FF) \mid \FF\in\arg\max\{c(\FF) \mid \FF\in\feas^*_d(\LL)\}\big\} \\
        &= \max\{c(\FF) \mid \FF\in\feas^*_d(\LL)\}.
    \end{align*}
    Thus, the transformation preserves the leader's objective function value of every~$\LL\subseteq I_\ell$, proving that the transformed instances are equivalent.
\end{proof}
Note that the transformations described in \Cref{lem:trans-bil-opt-pes} are inverse to one another. The integrality assumption of~$d$ 
was only made to simplify the proof; it can always be obtained by scaling if~$d$ has rational values.
We next show that~$\bil{\problem}$ can be at most one level harder than~$\problem$ in the polynomial hierarchy.

\begin{theorem}\label{lem:complexity-bil}
    For any combinatorial optimization problem~$\problem$, the decision version of $\bil{\problem}$ belongs to~$\np^\problem$.
\end{theorem}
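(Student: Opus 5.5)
We give a nondeterministic polynomial-time algorithm with access to an oracle for~\problem. Take the decision version of \bil{\problem} in its natural form: given an instance together with a threshold~$t\in\Q$, decide whether some leader's choice~$\LL\subseteq I_\ell$ and some optimal follower's reaction~$\FF$ to~$\LL$ satisfy $c(\LL\cup\FF)\ge t$. We work in the optimistic setting. This is enough, because \cref{lem:trans-bil-opt-pes} transforms pessimistic instances into optimistic ones in polynomial time and changes only~$d$.

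The certificate is a pair $(\LL,\FF)$ with $\LL\subseteq I_\ell$ and $\FF\subseteq I_f$, which has polynomial size. We verify it in three steps.
\begin{enumerate}
\item Check that $\LL\cup\FF\in\feas$. One oracle call to~\problem suffices, for example by asking for the optimum of~\problem on the full instance with weights that force exactly the set $\LL\cup\FF$.
\item Check that $c(\LL\cup\FF)\ge t$ by direct summation.
\item Check that $\FF$ is follower-optimal for~$\LL$, i.e.\ that $d(\FF)=\max\{d(\FF')\mid \FF'\subseteq I_f,\ \LL\cup\FF'\in\feas\}$.
\end{enumerate}

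For step~3 we encode the follower's problem as a single instance of~\problem on the ground set~$I$. Choose a large constant~$\bigM$, for instance $\bigM>2\sum_{i\in I_f}|d(i)|+1$, and define weights~$w$ by
\begin{itemize}
\item $w(i)=\bigM$ for $i\in\LL$,
\item $w(i)=-\bigM$ for $i\in I_\ell\setminus\LL$,
\item $w(i)=d(i)$ for $i\in I_f$.
\end{itemize}
A maximizer of~$w$ over~$\feas$ then contains all of~$\LL$ and nothing else from~$I_\ell$, provided some feasible solution does. This is guaranteed, since step~1 exhibits $\LL\cup\FF$ as one. Among these solutions, the maximizer optimizes~$d$ on~$I_f$, so the optimal follower value equals $\mathrm{opt}_w-\bigM|\LL|$. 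Querying the oracle once for $\mathrm{opt}_w$ and comparing this value with~$d(\FF)$ completes step~3. All weights have polynomial encoding length, so the verifier runs in polynomial time with oracle access.

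Correctness is immediate. An optimistic optimal pair of the bilevel problem with value at least~$t$ is a certificate that is accepted. Conversely, any accepted certificate is a feasible leader's choice together with an optimal follower's reaction achieving value at least~$t$, which in the optimistic setting means the leader attains at least~$t$.

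The main obstacle is the modeling assumption behind the oracle. We must take~\problem as the optimization (or evaluation) problem on arbitrary rational weights, so that the penalty weights~$\pm\bigM$ are admissible and so that feasibility can be tested. If feasibility were not checkable in polynomial time directly, step~1 would use the same penalty trick: put weight~$\bigM$ on $\LL\cup\FF$ and~$-\bigM$ on its complement, then compare the optimum with $\bigM|\LL\cup\FF|$. The same choice of~$\bigM$ must then be checked against the range of~$d$.
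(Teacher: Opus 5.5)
Your proof is correct, but it takes a different route from the paper. The paper's certificate is only the leader's choice $\LL$, and the oracle computes the optimistic follower response itself. It solves one instance of \problem with weights $\bigM$ on $\LL$, $-\bigM$ on $I_\ell\setminus\LL$, and $d(i)+\epsilon c(i)$ on $I_f$, so the perturbation $\epsilon c$ breaks ties among follower-optimal responses in the leader's favour. It then checks that the returned optimal solution $S$ satisfies $S\cap I_\ell=\LL$ and that $c(S)$ reaches the threshold.

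You instead put the optimistic response $\FF$ into the certificate, so no tie-breaking is needed. The oracle then only has to certify two things: that $\LL\cup\FF$ is feasible, and, through the unperturbed weights $w$, that $d(\FF)$ equals the optimal follower value. Your route is cleaner in several respects. It avoids choosing $\epsilon$ and the implicit integrality or scaling assumption on $d$ that the perturbation argument needs. It also only requires optimal values from the oracle, not optimal solutions, and your $\bigM>2\sum_{i\in I_f}|d(i)|+1$ is comfortably large enough.

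What the paper's version buys is that, for a \emph{fixed} $\LL$, one oracle call evaluates the leader's optimistic value. This is exactly what \cref{cor:ftp-rp} uses when it enumerates all $2^{|I_\ell|}$ leader choices. With your certificate, evaluating a given $\LL$ would still require finding the best $\FF$ among the follower-optimal responses, and that brings the perturbation back.

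A small aside on your last paragraph: the feasibility test with $\pm\bigM$ on $\LL\cup\FF$ and its complement does not involve $d$ at all. Any positive weights, e.g.\ $\pm 1$, work, so nothing needs to be checked against the range of $d$.
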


\begin{proof}
    We prove the statement for~$\bil{\problem}$, \ie the partitioned-items problem in the optimistic setting.
    For the pessimistic setting, the same result follows by first transforming a pessimistic into an optimistic instance, according to \cref{lem:trans-bil-opt-pes}.

    Consider an instance~$\mathcal J$ of the decision version of~\bil{\problem} based on an instance $\instance=(I, \feas, c)$ of the combinatorial optimization problem \problem, along with a partition $I=I_\ell~\dot\cup~I_f$, a follower's objective function $d$, and a threshold $k\in \Q$.
    The instance~$\mathcal J$ is a yes-instance if there exists a leader's solution~$\LL\subseteq I_\ell$ and a follower's response~$\FF\in\arg\max\{d(\FF') \mid \FF'\subseteq I_f,\ \LL\cup \FF'\in\feas\}$ such that~$c(\LL \cup \FF)\geq k$.
    We show that such an~$\LL$ is a certificate for a yes-instances of~$\mathcal J$ that can be verified in polynomial time using an oracle for~\problem.
    To this end, we define an instance~$\instance_\LL = (I,\feas,c_\LL)$ of~\problem with
    weight function $c_\LL\colon I\to\Q$ as follows:
                            
    \[
        c_\LL(i)\coloneqq \begin{cases}\begin{array}{cl}
            \phantom{+}\bigM & \text{ if } i\in \LL, \\
            -\bigM & \text{ if } i\in I_\ell\setminus\LL, \\
            d(i)+\epsilon\cdot c(i) & \text{ if } i\in I_f\;,
            \end{array}
        \end{cases}
    \]
    where~$\bigM,\epsilon\in\Q$ with~$\bigM > \max\{\sum_{i\in I_f} \abs{d(i)}, 1\}$ and~$\epsilon<\big(\sum_{i \in I_f}\abs{c(i)}\big)^{-1}$.
    For the well-definedness of~$\epsilon$, we may assume
    that there exists an~$i\in I_f$ with~$c(i)\neq 0$. Otherwise, the instance~$\cal J$ of~$\bil{\problem}$ is equivalent to the instance~$\instance$ of~$\problem$ and could be decided with an oracle for~\problem.
    
    It suffices to show that~$\mathcal J$ is a yes-instance if and only if there exists a certificate~$\LL\subseteq I_\ell$ such that the optimal solution~$S$ returned by the oracle on input $\instance_\LL$ satisfies~$I_\ell\cap S = \LL$ and~$c_\LL(S)\geq k$, because the latter can be checked in polynomial time given the oracle for~$\problem$.
    Indeed, for a yes-instance, there exists $\LL\subseteq I_\ell$ with  an optimal follower's response~$\FF^*$ to~$\LL$ such that~$c(\LL\cup\FF^*) \geq k$.
    By the choice of~$\bigM$, the solution~$S$ will always contain~$\LL$ and be disjoint to~$I_\ell\setminus\LL$ when~$\LL$ is extended by some~$\FF$ to a feasible solution~$\LL\cup\FF\in\feas$.
    Moreover, by the choice of~$\epsilon$, the set~$S\setminus\LL$ will always be an optimistic optimal follower's response to~$\LL$ in the instance~$\mathcal J$.
    Thus,~$c_\LL(S) = c(\LL\cup\FF^*) \geq k$.
    
    For the other direction, let~$\LL\subseteq I_\ell$ such that the optimal solution~$S$ returned by the oracle on input $\instance_\LL$ satisfies~$I_\ell\cap S = \LL$ and~$c_\LL(S)\geq k$.
    Since~$I_\ell\cap S = \LL$, the certificate~$\LL$ is in fact a feasible leader's solution to~$\cal J$ as it can be extended by~$\FF\coloneqq S\setminus \LL\subseteq I_f$ to a feasible solution~$\LL~\cup~\FF~\in~\feas$.
    Moreover, by the choice of~$\epsilon$, the set~$\FF$ must be an optimal follower's response to~$\LL$ in the instance~$\mathcal J$ of~$\bil{\problem}$.
    Since~$c(\LL\cup \FF) = c_\LL(S)\geq k$, it follows that~$\mathcal J$ is a yes-instance.
\end{proof}
Note that it is crucial in this proof that the given oracle for~$\problem$ accepts arbitrary linear objective functions, possibly including negative coefficients. The shortest path problem is \np-hard in this regime, so that \Cref{lem:complexity-bil} does not contradict the $\Sigma_2^\text{P}$-hardness of the partitioned-items shortest path problem shown in~\cite{HW25_ComplexityBilevelShortestPath}.

\bibliographystyle{plain} %alpha
\bibliography{literature}

\end{document}